\documentclass[11pt]{article}

\usepackage[utf8]{inputenc}
\usepackage[T1]{fontenc}
\usepackage[english]{babel}
\usepackage{lmodern}
\usepackage{microtype}
\usepackage[a4paper,margin=2.5cm]{geometry}
\usepackage{amsmath,amssymb,amsfonts,mathtools,amsthm}
\usepackage{bm}
\usepackage{graphicx}
\usepackage{booktabs}
\usepackage{tabularx}
\usepackage{array}
\usepackage{multirow}
\usepackage{caption}
\usepackage{subcaption}
\usepackage{algorithm}
\usepackage{algorithmic}
\usepackage{enumitem}
\usepackage{hyperref}
\usepackage[dvipsnames]{xcolor}

\usepackage[nameinlink]{cleveref}\hypersetup{
	colorlinks=true,
	linkcolor=blue!60!black,
	citecolor=blue!60!black,
	urlcolor=blue!60!black
}

\newcommand{\R}{\mathbb{R}}
\newcommand{\Sym}{\mathbb{S}}
\newcommand{\PSD}{\Sym^{n}_{+}}
\newcommand{\PD}{\Sym^{n}_{++}}
\newcommand{\tr}{\mathrm{tr}}
\newcommand{\diag}{\mathrm{diag}}

\newcommand{\eps}{\varepsilon}
\newcommand{\norm}[1]{\left\lVert#1\right\rVert}
\newcommand{\inner}[2]{\left\langle #1,\,#2 \right\rangle}
\newcommand{\Frob}{\mathrm{F}}
\newcommand{\opnorm}[1]{\left\lVert#1\right\rVert_{2}}
\newcommand{\Hess}{\nabla^{2}}
\newcommand{\Grad}{\nabla}

\newtheorem{theorem}{Theorem}[section]
\newtheorem{lemma}[theorem]{Lemma}
\newtheorem{corollary}[theorem]{Corollary}
\newtheorem{proposition}[theorem]{Proposition}

\theoremstyle{definition}
\newtheorem{definition}[theorem]{Definition}

\theoremstyle{remark}
\newtheorem{remark}[theorem]{Remark}
\crefname{remark}{Remark}{Remarks}
\crefname{assumption}{Assumption}{Assumptions}
\crefname{example}{Example}{Examples}

\title{\bfseries A Perturbed $q$-Tsallis Self-Concordant Barrier
	for Spectrally Robust Semidefinite Programming	}

\author{
	Sergio Assuncao Monteiro\thanks{ESPM, Rio de Janeiro, Brazil.}
	\and
	Fabricio Alves Barbosa da Silva\thanks{FIOCRUZ, Rio de Janeiro, Brazil.}
}

\ifpdf
\hypersetup{
	pdftitle={A Perturbed q-Tsallis Self-Concordant Barrier for
		Spectrally Robust Semidefinite Programming},
	pdfauthor={S. A. Monteiro and F. A. B. da Silva}
}
\fi

\date{}
\begin{document}
	
	\maketitle
	
	\begin{abstract}
		We introduce and analyse a perturbed $q$-Tsallis barrier for semidefinite programming (SDP), defined as a spectral perturbation of the classical log-det barrier on the cone of positive definite matrices. The barrier introduces eigenvalue-adaptive stiffening through a Tsallis-type matrix-power term controlled by parameters $q>1$ and $\eta\geq0$. 
		
		Our main theoretical contribution is a sharp characterisation of the differential self-concordance regime of the barrier. We prove that the barrier is differentially self-concordant on the interior of the positive semidefinite cone for all $\eta\geq0$ if and only if $q\in(1,2]$, establishing the exact threshold at $q=2$. For $q>2$, uniform self-concordance fails globally, although local sufficient conditions remain valid on compact spectral domains. On compact feasible sets, the effective local barrier parameter remains $O(n)$, preserving the same asymptotic iteration complexity class as the classical log-det barrier.
		
		We further establish a spectral robustness result showing that the sensitivity of the central path to perturbations is selectively damped in small-eigenvalue directions according to the scaling $\kappa(X^*)^{-(q-1)}$, where $\kappa(X^*)$ denotes the spectral condition number. This yields improved robustness relative to the classical log-det barrier for ill-conditioned SDP solutions.
		
		Finally, we develop a Mehrotra-type primal--dual predictor--corrector interior-point method equipped with a Lanczos-based Krylov kernel for evaluating matrix powers efficiently. Numerical experiments validate the theoretical predictions and demonstrate improved robustness together with significant computational acceleration.	\end{abstract}
	
	\vspace{0.5em} 
	\noindent\textbf{Keywords:}
	self-concordant barriers, semidefinite programming,
	interior-point methods, 	spectral conditioning,
	matrix functions, predictor--corrector methods
	\vspace{0.5em} \noindent\textbf{AMS subject classifications:}
	90C22, 90C25, 90C51, 65F55, 90C06
	\section{Introduction}\label{sec:intro}
	
	Semidefinite programming (SDP) has been one of the most influential
	modelling and algorithmic frameworks in mathematical optimisation
	since the seminal work of Nesterov and Nemirovski on self-concordant
	barriers \cite{nesterovnemirovski1994}. The standard primal SDP is
	\begin{equation}\label{eq:sdp_primal}
		\min_{X\in\Sym^{n}}\; \inner{C}{X}
		\quad\text{s.t.}\quad
		\mathcal{A}(X) = b,\quad X\succeq 0,
	\end{equation}
	together with its dual
	\begin{equation}\label{eq:sdp_dual}
		\max_{y\in\R^{m},\,S\in\Sym^{n}}\; b^{\top}y
		\quad\text{s.t.}\quad
		\mathcal{A}^{*}(y) + S = C,\quad S\succeq 0,
	\end{equation}
	where $\inner{A}{B}=\tr(A^{\top}B)$, $\mathcal{A}\colon\Sym^{n}\to\R^{m}$
	is a linear operator, $b\in\R^{m}$, and $C\in\Sym^{n}$. Modern
	interior-point methods (IPMs) rely on a self-concordant barrier for
	$\PSD$. The canonical choice $\phi(X)=-\log\det(X)$ achieves the
	optimal parameter $\nu=n$ \cite{nesterovtodd1997,renegar2001} and
	underlies solvers such as SDPT3, SeDuMi, and MOSEK
	\cite{tohtoddtutuncu1999,sturm1999,mosek2024}.
	
	\subsection{Motivation}
	
	The log-det barrier treats every eigenvalue of the iterate
	symmetrically. In modern applications --- robust covariance
	estimation \cite{ledoitwolf2004}, low-rank matrix completion
	\cite{candesrecht2009}, network inference from heavy-tailed
	transcriptomic data \cite{sgariglia2024} --- the optimal solution
	$X^{*}$ frequently exhibits a markedly nonuniform spectrum: a few
	dominant eigenvalues encode the signal while a tail of small
	eigenvalues is noise-dominated. The logarithmic barrier is, by
	construction, equally sensitive to perturbations in every part of the
	spectrum, which motivates the search for \emph{eigenvalue-adaptive}
	barriers.
	
	\subsection{The perturbed $q$-Tsallis barrier}
	
	We study the family
	\begin{equation}\label{eq:our_barrier}
		\boxed{\;
			\phi_{q,\eta}(X)
			\;=\; -\log\det(X)
			\;+\; \frac{\eta}{q-1}\Bigl[\tr\bigl(X^{-(q-1)}\bigr)-n\Bigr]
			\;}
	\end{equation}
	for $q>1$, $\eta\geq 0$, $X\in\PD$. Setting $\eta=0$ recovers the
	classical log-det barrier exactly.
	
	\subsection{Contributions}
	
	\paragraph{(C1) Theory.}
	We establish a precise characterisation of the self-concordance
	regime of $\phi_{q,\eta}$ (\cref{thm:self_concordance}):
	\begin{itemize}[leftmargin=2em,itemsep=0.3ex]
		\item \emph{Unconditional regime} $q\in(1,2]$: $\phi_{q,\eta}$
		satisfies the differential self-concordance inequality on
		$\mathrm{int}(\PSD)$ for every $\eta\geq 0$,
		with effective \emph{local} barrier parameter $O(n)$ on any compact
		subset (\cref{app:nu_param}). This is exact: $q=2$ is the boundary.
		\item \emph{Global obstruction} $q>2$: $\phi_{q,\eta}$ fails to be globally
		self-concordant for any $\eta>0$, because the scalar inequality
		$\psi(u)\geq 0$ admits violations as $u\to 0^{+}$.
		The formula $\eta_{\max}(q)=u_{\mathrm{zero}}(q)$ (eq.~\eqref{eq:u_zero_proof}) provides a
		\emph{local} sufficient condition valid when the iterates
		remain in a compact spectral domain.
	\end{itemize}
	Both parts are supported by numerical verification of the scalar
	inequality across the full $(q,\eta,\lambda)$ parameter space
	(Experiment~E, \cref{sec:experiments}). We complement
	this with a spectral robustness theorem
	(\cref{thm:robustness}) bounding the sensitivity of the
	central path to right-hand-side perturbations.
	
	\paragraph{(C2) Algorithms.}
	We design a Mehrotra-type primal--dual predictor--corrector IPM based
	on $\phi_{q,\eta}$ and the Nesterov--Todd symmetrisation, with a
	Krylov-based scheme for evaluating $X^{-q}v$ that reduces the
	per-iteration cost from $\mathcal{O}(n^{3})$ to
	$\mathcal{O}(n k_{\mathrm{Kry}}^{2})$ for fixed $k_{\mathrm{Kry}}$.
	The kernel has been implemented and validated: at $n=150$, it
	achieves $7\times$ speedup over exact diagonalisation with $k=10$
	Krylov vectors, and the solver converges in the same number of
	iterations whether the Hessian uses exact or Krylov-approximated
	$X^{-q}$ (Experiment~F, \cref{sec:experiments}).
	
	\paragraph{(C3) Empirical validation.}
	We report a comprehensive empirical validation across six experiments:
	\begin{itemize}[leftmargin=2em,itemsep=0.3ex]
		\item \textbf{Spectral robustness} (Exp.~A, D): for heavy-tailed optimal
		spectra, choosing $\eta\in[0.5,1.0]$ reduces the central-path
		sensitivity ratio by $41$--$83\%$ relative to $\eta=0$,
		consistently across $n\in\{8,15,20,30\}$ and 40 perturbations
		per configuration.
		\item \textbf{Complexity preservation} (Exp.~B, D): the outer iteration
		count is insensitive to $\eta$ across all configurations tested,
		consistent with $\mathcal{O}(\sqrt{n}\log(1/\eps))$.
		\item \textbf{SC regime verification} (Exp.~E): the unconditional SC
		for $q\in(1,2]$ and the failure for $q>2$ are confirmed by
		exhaustive numerical evaluation of the scalar inequality.
		\item \textbf{Krylov kernel} (Exp.~F): $k=10$ Krylov vectors give
		relative error $<10^{-3}$ for $\kappa\leq 10$, $7\times$ speedup
		at $n=150$, and identical solver convergence vs exact Hessian.
	\end{itemize}
	
	\section{Related work}\label{sec:related}
	
	\paragraph{Self-concordant barriers.}
	The theory was established by Nesterov and Nemirovski
	\cite{nesterovnemirovski1994}. The universal barrier was shown by
	Bubeck and Eldan \cite{bubeckeldan2019} to be $(1+o(1))n$-self-concordant.
	Hildebrand \cite{hildebrand2014,hildebrand2021} introduced canonical
	and projectively self-concordant barriers. The cone $\PSD$ has the
	optimal $n$-self-concordant barrier $-\log\det(X)$
	\cite{nesterovtodd1997,renegar2001}.
	
	\paragraph{Self-concordant spectral barriers and matrix monotonicity.}
	Faybusovich and Tsuchiya \cite{faybusovichtsuchiya2017} proved that,
	on the cone of squares of any Euclidean Jordan algebra, the spectral
	function $\tr(g(X))-\log\det(X)$ is self-concordant whenever $g'$ is
	matrix monotone on the positive semiaxis. By Loewner's theorem,
	$\lambda\mapsto-\lambda^{-r}$ is matrix monotone on $(0,\infty)$ iff
	$r\in[0,1]$. The derivative of $g_{q,\eta}(\lambda)
	=\frac{\eta}{q-1}\lambda^{-(q-1)}$ is $g_{q,\eta}'(\lambda)
	=-\eta\lambda^{-q}$, which is matrix monotone only for $q\in(0,1]$
	and \emph{not} in the regime $q\in(1,2]$ studied here. The
	self-concordance of $\phi_{q,\eta}$ for $q\in(1,2]$ therefore does
	not follow from~\cite{faybusovichtsuchiya2017}; we establish it
	directly via a scalar reduction (\cref{thm:self_concordance} and
	\cref{app:proof_reduction}). The present paper supplies, beyond
	self-concordance: (i) the sharp threshold at $q=2$ together with the
	global obstruction for $q>2$; (ii) the quantitative local condition
	$\eta_{\max}(q)=u_{\mathrm{zero}}(q)$; (iii) the effective barrier
	parameter $\nu_{\mathrm{eff}}=O(n)$ on compact subsets of $\PD$,
	with the resulting $\mathcal{O}(\sqrt{n}\log(1/\eps))$ short-step
	complexity (\cref{app:nu_param}); and (iv) the spectral robustness
	bound of \cref{thm:robustness}, together with the connection to
	nonextensive Tsallis statistics as a principled parameterisation.
	
	\paragraph{Generalised self-concordance.}
	Bach \cite{bach2010} introduced generalised self-concordance for
	machine-learning losses. Sun and Tran-Dinh
	\cite{suntrandinh2019} developed $(M,\nu)$-self-concordant
	calculus covering losses in robust statistics. Our analysis shows
	$\phi_{q,\eta}$ satisfies the differential self-concordance inequality
	in the sense of \cite{nesterovnemirovski1994} for $q\in(1,2]$, with an effective
	barrier parameter $O(n)$ on compact subsets
	(\cref{thm:nu_eff}), so the classical short-step IPM complexity
	$\mathcal{O}(\sqrt{n}\log(1/\eps))$ applies without recourse to
	the generalised theory.
	
	\paragraph{Entropic barriers.}
	Bubeck and Eldan \cite{bubeckeldan2019} proved the entropic barrier
	is $(1+o(1))n$-self-concordant. Karimi and Tun\c{c}el
	\cite{karimituncel2024} developed efficient IPMs for quantum relative
	entropy. He, Saunderson, and Fawzi \cite{fawzisaunderson2025} studied
	operator convexity as a route to self-concordance for spectral
	functions including sandwiched R\'enyi entropies.
	
	\paragraph{Predictor--corrector and Nesterov--Todd methods.}
	Mehrotra \cite{mehrotra1992} is the de facto standard. The
	Nesterov--Todd direction \cite{nesterovtodd1997,nesterovtodd1998}
	achieves $\mathcal{O}(\sqrt{n}L)$ complexity. Todd, Toh and
	T\"ut\"unc\"u \cite{toddtohtutuncu1998} analysed it in detail;
	Monteiro \cite{monteiro1998} derived an implementable predictor--corrector
	scheme. Myklebust and Tun\c{c}el \cite{myklebusttuncel2014} extended
	the machinery to arbitrary self-concordant barriers.
	
	\paragraph{Tsallis statistics in optimisation.}
	The $q$-deformed logarithm \cite{tsallis1988,tsallis2009} appears in
	robust regression \cite{amidetal2019} and in deep learning regularisation
	for transcriptomic data. The present work
	provides an optimisation-theoretic perspective for these applications; extension to network inference in cancer genomics is
	planned as a companion paper.
	
	\section{Preliminaries}\label{sec:preliminaries}
	
	\subsection{Notation}
	$\Sym^{n}$ denotes real $n\times n$ symmetric matrices, $\PSD$ its
	positive semidefinite cone, $\PD$ the positive definite interior.
	$\inner{A}{B}=\tr(A^{\top}B)$; Frobenius norm
	$\norm{A}_{\Frob}=\sqrt{\inner{A}{A}}$; operator norm
	$\opnorm{A}=\lambda_{\max}(A)$ for $A\in\Sym^{n}$, with eigenvalues
	$\lambda_{1}\geq\cdots\geq\lambda_{n}$.
	
	\subsection{Self-concordance}
	
	\begin{definition}[Self-concordant function]\label{def:selfconc}
		A convex $f\colon Q\to\R$ on an open convex $Q$ is
		\emph{self-concordant} if $|D^{3}f(x)[h,h,h]|\leq
		2(D^{2}f(x)[h,h])^{3/2}$ for all $x\in Q$, $h\in\R^{N}$.
	\end{definition}
	
	\begin{definition}[Self-concordant barrier]\label{def:scb}
		\sloppy
		A self-concordant $f$ is a \emph{$\nu$-self-concordant barrier} for
		$K=\overline{Q}$ if $f(x_{k})\to+\infty$ whenever $x_{k}\to\partial K$,
		and $|Df(x)[h]|\leq\sqrt{\nu}\sqrt{D^{2}f(x)[h,h]}$ for all $x,h$.
	\end{definition}
	
	\begin{theorem}[Nesterov--Nemirovski {\cite[Thm.~2.4.1]{nesterovnemirovski1994}}]
		\label{thm:nn_complexity}
		A $\nu$-self-concordant barrier yields
		$\mathcal{O}(\sqrt{\nu}\log(1/\eps))$ outer iterations for short-step
		path-following.
	\end{theorem}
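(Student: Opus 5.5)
The statement is the classical Nesterov--Nemirovski short-step complexity bound, and I will recover it from the two defining inequalities in \cref{def:selfconc} and \cref{def:scb}. The objects are the central path $x^{*}(t)=\arg\min_{x}\{t\inner{c}{x}+f(x)\}$ and the local norm $\|h\|_{x}=(D^{2}f(x)[h,h])^{1/2}$. The method keeps $x_k$ close to $x^{*}(t_k)$ in the sense that the Newton decrement satisfies
\[
\lambda_{t_k}(x_k)=\bigl\|\Hess f(x_k)^{-1}\bigl(t_k c+\Grad f(x_k)\bigr)\bigr\|_{x_k}\leq\beta ,
\]
with $\beta=1/9$, say. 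One outer step increases $t$ by the factor $1+\gamma/\sqrt{\nu}$ and then takes one full Newton step. The plan has three steps.

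\textbf{Step 1: quadratic convergence of Newton's method.} Self-concordance alone implies that one full Newton step on $f_t=t\inner{c}{\cdot}+f$ satisfies $\lambda_t(x^{+})\leq\bigl(\lambda_t(x)/(1-\lambda_t(x))\bigr)^{2}$ whenever $\lambda_t(x)<1$. To prove this, I restrict to lines and use the standard consequence of $|D^{3}f|\leq 2(D^{2}f)^{3/2}$: the Hessian is stable on Dikin ellipsoids,
\[
(1-r)^{2}\Hess f(x)\preceq\Hess f(y)\preceq(1-r)^{-2}\Hess f(x),\qquad r=\|y-x\|_{x}<1 .
\]
The gradient at $x^{+}$ is then an integral of Hessian differences, and I bound it using this sandwich.

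\textbf{Step 2: the cost of increasing $t$.} This is the step that uses the barrier inequality $|Df(x)[h]|\leq\sqrt{\nu}\|h\|_{x}$. Changing $t$ to $t'=(1+\gamma/\sqrt{\nu})t$ changes the decrement by at most
\[
\frac{t'-t}{t}\,\bigl\|\Hess f(x)^{-1}tc\bigr\|_{x}\leq\frac{\gamma}{\sqrt{\nu}}\bigl(\lambda_t(x)+\sqrt{\nu}\bigr),
\]
because $tc=(tc+\Grad f)-\Grad f$ and $\|\Grad f(x)\|_{x}^{*}\leq\sqrt{\nu}$. So $\lambda_{t'}(x)\leq\beta+\gamma(1+\beta)$, which is $\leq 1/4$, say, for $\gamma=1/8$. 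Applying Step 1 then brings the decrement back to at most $(1/3)^{2}=1/9=\beta$. This closes the invariant with $t$ growing geometrically at rate $1+\Theta(1/\sqrt{\nu})$.

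\textbf{Step 3: the accuracy bound and the count.} On the central path, $t\Grad$ optimality gives $c=-\Grad f(x^{*}(t))/t$. Combining this with the barrier fact $\inner{\Grad f(x)}{y-x}\leq\nu$ for feasible $y$ yields $\inner{c}{x^{*}(t)}-\inner{c}{x^{\star}}\leq\nu/t$. The same holds up to a constant factor for $x$ with $\lambda_t(x)\leq\beta$, again by Dikin-ellipsoid stability. Hence $\epsilon$-accuracy needs $t\gtrsim\nu/\epsilon$. Starting from $t_0$ with a warm-start point satisfying $\lambda_{t_0}\leq\beta$, the number of outer iterations is
\[
\frac{\log(\nu/(t_0\epsilon))}{\log(1+\gamma/\sqrt{\nu})}=\mathcal{O}\bigl(\sqrt{\nu}\log(1/\epsilon)\bigr),
\]
with $\nu$ and $t_0$ treated as problem constants inside the logarithm.

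I expect the main obstacle to be the inequality $\inner{\Grad f(x)}{y-x}\leq\nu$. I would prove it from the barrier inequality by looking at $\varphi(s)=Df(x+s(y-x))[y-x]$, which satisfies $\varphi'\geq\varphi^{2}/\nu$. If $\varphi(0)>\nu$, this differential inequality makes $\varphi$ blow up before $s=1$, contradicting that the whole segment lies in the domain. Apart from this, the remaining constants are routine, and since the statement is cited from \cite[Thm.~2.4.1]{nesterovnemirovski1994}, I would defer to that reference for the exact choices of $\beta$ and $\gamma$.
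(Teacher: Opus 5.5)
The paper gives no proof of this statement: it is imported directly from \cite[Thm.~2.4.1]{nesterovnemirovski1994}, and your sketch is the standard argument behind that citation. It rests on three ingredients: quadratic convergence of Newton's method inside the Dikin ellipsoid, the $1+\Theta(1/\sqrt{\nu})$ admissible growth of $t$ obtained from $\|\Grad f(x)\|_{x}^{*}\leq\sqrt{\nu}$, and the $\nu/t$ gap bound obtained from $\inner{\Grad f(x)}{y-x}\leq\nu$. The argument is correct, and your constants $\beta=1/9$, $\gamma=1/8$ close the invariant exactly: $\beta+\gamma(1+\beta)=1/4$, and one Newton step then gives $(1/3)^{2}=1/9$. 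Step~2 tacitly uses the standard fact $\nu\geq 1$.
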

	
	\begin{theorem}[Nesterov--Todd {\cite{nesterovtodd1997}}]
		\label{thm:logdet_sc}
		$\phi_{0}(X)=-\log\det(X)$ is an $n$-self-concordant barrier on $\PSD$,
		with $\Grad\phi_{0}(X)=-X^{-1}$ and
		$\Hess\phi_{0}(X)[H,K]=\inner{X^{-1}HX^{-1}}{K}$.
	\end{theorem}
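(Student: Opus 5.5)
The plan is to verify the three ingredients of \cref{def:scb} directly for $\phi_{0}(X)=-\log\det(X)$, reducing everything to eigenvalues of a single symmetric matrix. First I compute the derivatives. For $X\in\PD$ and $H\in\Sym^{n}$, let $X_t=X+tH$ for small $|t|$, so that $X_t\in\PD$. Then
\[
\log\det(X_t)=\log\det(X)+\log\det(I+tX^{-1/2}HX^{-1/2}).
\]
Put $\hat H=X^{-1/2}HX^{-1/2}\in\Sym^{n}$, with eigenvalues $\mu_{1},\dots,\mu_{n}$. Expanding $\log(1+t\mu_i)$ gives
\[
D\phi_{0}(X)[H]=-\tr\hat H,\qquad D^{2}\phi_{0}(X)[H,H]=\tr\hat H^{2},\qquad D^{3}\phi_{0}(X)[H,H,H]=-2\tr\hat H^{3}.
\]
Rewriting these in the original variables yields $\Grad\phi_{0}(X)=-X^{-1}$, since $\tr\hat H=\inner{X^{-1}}{H}$. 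Polarising the second form gives $\Hess\phi_{0}(X)[H,K]=\tr(X^{-1}HX^{-1}K)=\inner{X^{-1}HX^{-1}}{K}$. Convexity follows because $\tr\hat H^{2}=\sum_i\mu_i^{2}\ge0$, and $\PD$ is open and convex.

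Next I check the self-concordance inequality. We need $|\sum_i\mu_i^{3}|\le(\sum_i\mu_i^{2})^{3/2}$. This holds because
\[
\Bigl|\sum_i\mu_i^{3}\Bigr|\le\max_i|\mu_i|\sum_i\mu_i^{2}\le\Bigl(\sum_i\mu_i^{2}\Bigr)^{1/2}\sum_i\mu_i^{2}.
\]
Multiplying by $2$ gives exactly the constant in \cref{def:selfconc}. For the barrier parameter, Cauchy--Schwarz gives
\[
|D\phi_{0}(X)[H]|=\Bigl|\sum_i\mu_i\Bigr|\le\sqrt{n}\Bigl(\sum_i\mu_i^{2}\Bigr)^{1/2}=\sqrt{n}\sqrt{D^{2}\phi_{0}(X)[H,H]},
\]
so $\nu=n$. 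Equality at $H=X$, where all $\mu_i=1$, shows this value is attained; that is the sense in which $n$ is optimal, though the statement only needs the upper bound.

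The boundary behaviour comes last. If $X_k\to\bar X\in\partial\PSD$ with $X_k\in\PD$, then $\lambda_{\min}(X_k)\to0$, while the remaining eigenvalues stay bounded above because $X_k$ converges. Hence $\det X_k\to0^{+}$ and $-\log\det X_k\to+\infty$. The only mildly delicate point is this step when the sequence is unbounded; boundary convergence in the definition is understood for convergent sequences, so boundedness is available.

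The main obstacle, such as it is, lies in the derivative computation. One must justify the congruence reduction to $\hat H$, in particular that $X^{-1/2}$ exists and that eigenvalues of $\hat H$ control all three forms simultaneously. Once that reduction is in place, every remaining inequality is a one-line scalar estimate.
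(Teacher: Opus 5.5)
Your proof is correct. The paper does not prove this statement; it is quoted from Nesterov--Todd. The mechanism appears only in \cref{app:nu_param}, where $D\phi_{0}(X)[X]=-n$, $D^{2}\phi_{0}(X)[X,X]=n$ and the global bound $|D\phi_{0}(X)[H]|^{2}\le n\,D^{2}\phi_{0}(X)[H,H]$ are attributed to logarithmic homogeneity, $\phi_{0}(tX)=\phi_{0}(X)-n\log t$, via \cite[Prop.~2.3.2]{nesterovnemirovski1994}. That route reads $\nu=n$ off the homogeneity degree: differentiating in $t$ gives $\inner{\Grad\phi_{0}(X)}{X}=-n$ and $\Hess\phi_{0}(X)[X]=-\Grad\phi_{0}(X)$, and Cauchy--Schwarz in the local norm finishes. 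It works for any logarithmically homogeneous barrier on any cone, but it says nothing about the third-derivative inequality, which must be checked separately.

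Your route is self-contained and elementary. The congruence $X+tH=X^{1/2}(I+t\hat H)X^{1/2}$ turns all three directional derivatives into power sums of the eigenvalues of the single matrix $\hat H$. Self-concordance, $\nu=n$ (attained at $H=X$), and both derivative formulas then become one-line scalar estimates. This works because $-\log\det$ changes only by an additive constant under $X\mapsto PXP^{\top}$. That invariance fails for $\phi_{q,\eta}$ with $\eta>0$, which is why the paper instead diagonalises $X$ itself and uses Daleckii--Krein divided differences. The failure in the scalar case $P=\sqrt{t}\,I$, i.e.\ the loss of logarithmic homogeneity, is exactly what costs the global barrier parameter in \cref{rem:nu_global}.

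Your caveat that boundary divergence is meant along convergent sequences is also justified. For $X_{k}=\diag(k^{2},1/k)$ one has $\lambda_{\min}(X_{k})\to0$, yet $-\log\det X_{k}=-\log k\to-\infty$.
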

	
	\subsection{Spectral functions and matrix calculus}
	
	\begin{lemma}[Davis 1957]\label{lem:davis}
		$f(\lambda_{1},\dots,\lambda_{n})=\sum_{i}g(\lambda_{i})$ is convex
		on $\PD$ iff $g$ is convex on $(0,\infty)$.
	\end{lemma}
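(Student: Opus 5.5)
The plan is to prove the two implications separately. Both rest on the standard fact that a spectral function $F(X)=\sum_i g(\lambda_i(X))$ is orthogonally invariant: $F(Q^{\top}XQ)=F(X)$ for every orthogonal $Q$. We also use that restricting to diagonal matrices recovers $f(\lambda)=\sum_i g(\lambda_i)$.

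\emph{Necessity.} Assume $F$ is convex on $\PD$. Restricting $F$ to the convex set of positive diagonal matrices $\diag(\lambda)$ gives the map $\lambda\mapsto\sum_i g(\lambda_i)$, which is therefore convex on $(0,\infty)^n$. Fixing all coordinates but one then shows that $g$ is convex on $(0,\infty)$.

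\emph{Sufficiency.} Assume $g$ is convex on $(0,\infty)$. Let $X,Y\in\PD$, $t\in[0,1]$, and set $Z=tX+(1-t)Y$ with spectral decomposition $Z=Q\,\diag(\mu)\,Q^{\top}$. We use the Schur-type inequality for convex $g$: for any symmetric $A$ with eigenvector matrix $V$,
\begin{equation*}
\sum_i g\bigl((Q^{\top}AQ)_{ii}\bigr)\;\leq\;\sum_i g(\lambda_i(A)).
\end{equation*}
To see this, write $(Q^{\top}AQ)_{ii}=\sum_j P_{ij}\lambda_j(A)$, where $P_{ij}=(Q^{\top}V)_{ij}^2$ is doubly stochastic. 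Jensen's inequality applied row by row, followed by summation over $i$ using $\sum_i P_{ij}=1$, gives the bound. Now $\mu_i=(Q^{\top}ZQ)_{ii}=t(Q^{\top}XQ)_{ii}+(1-t)(Q^{\top}YQ)_{ii}$. Convexity of $g$ applied coordinatewise, and then the Schur inequality applied to $A=X$ and $A=Y$, yield
\begin{equation*}
F(Z)=\sum_i g(\mu_i)\leq t\sum_i g\bigl((Q^{\top}XQ)_{ii}\bigr)+(1-t)\sum_i g\bigl((Q^{\top}YQ)_{ii}\bigr)\leq tF(X)+(1-t)F(Y).
\end{equation*}
All diagonal entries stay in $(0,\infty)$ because $Q^{\top}XQ\succ 0$, so $g$ is evaluated only where it is defined and convex.

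The main obstacle is the Schur inequality step: recognising the doubly stochastic structure of $P$ and verifying positivity of the diagonal entries. Once that is in place, the rest is bookkeeping.
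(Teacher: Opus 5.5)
The paper gives no proof of this lemma; it is quoted from Davis (1957) and used only in \cref{prop:boundary_convexity}. Your proof is correct and complete.

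The necessity direction is fine: you restrict to positive diagonal matrices, then to a coordinate line. In the sufficiency direction, three ingredients give exactly the Schur-type bound you need: the identity $(Q^{\top}AQ)_{ii}=\sum_{j}U_{ij}^{2}\lambda_{j}(A)$ with $U=Q^{\top}V$ orthogonal, row-wise Jensen, and the unit column sums of $P$. State that bound for $A\in\PD$ rather than for arbitrary symmetric $A$, since $g$ is only defined on $(0,\infty)$.

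Your argument is essentially the classical one for Davis's theorem, specialised to separable $f$: diagonalise the midpoint $Z$, and use that the diagonal of a symmetric matrix is a doubly stochastic image of its spectrum. For separable $f$, row-wise Jensen suffices. A general symmetric convex $f$ would instead need Birkhoff's theorem, to write $P\lambda(A)$ as a convex combination of permutations of $\lambda(A)$. The orthogonal invariance you announce at the outset is never actually used; your explicit conjugation by $Q$ replaces it.

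Your route also gives more than the statement. Suppose $g$ is strictly convex and equality holds throughout your chain for some $t\in(0,1)$. Then the diagonals of $Q^{\top}XQ$ and $Q^{\top}YQ$ coincide. Equality in each row-wise Jensen step forces every column of $Q$ to be an eigenvector of both $X$ and $Y$. Hence $Q^{\top}XQ=Q^{\top}YQ$, so $X=Y$. This strict version is what \cref{prop:boundary_convexity} actually invokes, whereas the lemma as quoted yields only convexity.
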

	
	\begin{lemma}[Daleckii--Krein]\label{lem:daleckii_krein}
		For $F(X)=\tr(g(X))$ and $X=Q\diag(\lambda)Q^{\top}$,
		$D^{2}F(X)[H,H]=\sum_{i,j}g^{[2]}(\lambda_{i},\lambda_{j})\widetilde{H}_{ij}^{2}$
		where $\widetilde{H}=Q^{\top}HQ$ and
		$g^{[2]}(\lambda,\mu)=\frac{g'(\lambda)-g'(\mu)}{\lambda-\mu}$
		($g''(\lambda)$ if $\lambda=\mu$).
	\end{lemma}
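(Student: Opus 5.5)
The statement to prove is the Daleckii--Krein formula: for $F(X)=\tr(g(X))$ with $X=Q\diag(\lambda)Q^{\top}$, the second derivative is $D^{2}F(X)[H,H]=\sum_{i,j}g^{[2]}(\lambda_i,\lambda_j)\widetilde H_{ij}^2$. We reduce to the diagonal case and then compute the second derivative along a line.

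\emph{Step 1 (reduction).} Orthogonal conjugation gives $\tr(g(Q^{\top}YQ))=\tr(g(Y))$. Hence $F(X+tH)=F(\Lambda+t\widetilde H)$ with $\Lambda=\diag(\lambda)$, so we may assume $X=\Lambda$ and $H=\widetilde H$.

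\emph{Step 2 (first derivative).} Assume first that $g$ is smooth; for $\lambda^{-(q-1)}$ it is real-analytic on $(0,\infty)$. We use the first-order identity $D\,\tr(g(X))[H]=\tr(g'(X)H)$. To prove it, verify it for monomials $g(x)=x^k$ via cyclicity of the trace: $D\,\tr(X^k)[H]=\sum_{j}\tr(X^{j}HX^{k-1-j})=k\,\tr(X^{k-1}H)$. Extend to polynomials by linearity. Then pass to general $C^2$ functions by uniform approximation of $g$ and $g'$ on a compact interval containing the spectra of $X+tH$ for small $|t|$; alternatively, for analytic $g$, use the Cauchy integral representation.

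\emph{Step 3 (second derivative).} By Step 2, $\frac{d^2}{dt^2}F(\Lambda+tH)\big|_{t=0}=\tr\bigl(Dg'(\Lambda)[H]\,H\bigr)$. It remains to compute the Fréchet derivative of the matrix function $g'$ at a diagonal matrix. For $f(x)=x^k$ we have $Df(\Lambda)[H]_{ij}=\sum_{a+b=k-1}\lambda_i^{a}\lambda_j^{b}H_{ij}$. This sum equals $\frac{\lambda_i^k-\lambda_j^k}{\lambda_i-\lambda_j}H_{ij}$ when $\lambda_i\ne\lambda_j$, and $k\lambda_i^{k-1}H_{ij}$ when $\lambda_i=\lambda_j$. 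By linearity, for any polynomial $f$ we get $Df(\Lambda)[H]_{ij}=f^{[1]}(\lambda_i,\lambda_j)H_{ij}$. Applying this with $f=g'$ gives $\tr(Dg'(\Lambda)[H]H)=\sum_{i,j}g^{[2]}(\lambda_i,\lambda_j)H_{ij}H_{ji}$. Symmetry of $H$ then turns $H_{ij}H_{ji}$ into $H_{ij}^2$.

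\emph{Step 4 (main obstacle: passing from polynomials to general $g$).} This is the only delicate point. The plan is as follows:
\begin{itemize}
\item Take polynomials $p_m$ with $p_m\to g$ in $C^2$ on a compact interval $I\subset(0,\infty)$ containing all eigenvalues of $\Lambda+tH$ for $|t|\le t_0$ (Weierstrass applied to $g''$, then integrate twice).
\item The divided differences satisfy $|p_m^{[2]}-g^{[2]}|\le\sup_I|p_m''-g''|$ by the mean value theorem, so the right-hand sides converge uniformly.
\item The second derivatives $\frac{d^2}{dt^2}\tr(p_m(\Lambda+tH))$ converge uniformly in $t$, since the formula of Step 3 applies at every diagonalization along the path and the bound above is uniform in the eigenvalues. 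Hence the limit function $\tr(g(\Lambda+tH))$ is $C^2$ with the claimed second derivative.
\end{itemize}
Continuity of $g^{[2]}$ on $I\times I$, including the diagonal case $\lambda=\mu$, follows from $g\in C^2$. This completes the argument.
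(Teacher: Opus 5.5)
The paper gives no proof of this lemma. It is stated as the classical Daleckii--Krein result, attributed by name only, and the nearest thing to a justification is the appendix's pointer to divided-difference calculus in Higham's book. Your argument is therefore a genuinely different contribution: a correct, self-contained proof along the standard route for matrix-function calculus.

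\textbf{What you do.}
\begin{enumerate}
\item You verify the formula exactly for monomials via $D(X^{k})[H]=\sum_{a+b=k-1}X^{a}HX^{b}$. In the eigenbasis this gives the divided difference $p^{[1]}(\lambda_i,\lambda_j)H_{ij}$ entrywise.
\item You approximate $g$ in $C^{2}$ by polynomials on a compact interval $I\subset(0,\infty)$ containing the spectra of $\Lambda+tH$ for $|t|\le t_0$. Weyl's inequality supplies such a $t_0$.
\item The estimate $|(p_m'-g')^{[1]}|\le\sup_I|p_m''-g''|$, together with $\|Q_t^{\top}HQ_t\|_{\mathrm{F}}=\|H\|_{\mathrm{F}}$, makes $t\mapsto\frac{d^{2}}{dt^{2}}\tr p_m(\Lambda+tH)$ converge uniformly. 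This holds regardless of how $\Lambda+tH$ is diagonalised at repeated eigenvalues.
\item Two applications of the theorem on uniformly convergent derivatives then finish the argument, using that $\frac{d}{dt}\tr p_m(\Lambda+tH)|_{t=0}=\sum_i p_m'(\lambda_i)H_{ii}$ converges. They show that $t\mapsto F(X+tH)$ is $C^{2}$ with the stated second derivative at $t=0$.
\end{enumerate}

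\textbf{What this buys over the bare citation.} Your argument makes the hypothesis explicit: $g\in C^{2}$ on an interval suffices, and no analyticity is needed, so the Cauchy-integral alternative you mention is unnecessary. It also produces exactly the quantity the paper uses in the self-concordance inequality, namely the second derivative along lines. It does not by itself establish twice Fr\'echet differentiability of $F$, but nothing in the paper requires that.

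\textbf{One small simplification.} The extension of Step~2 to general $g$ is never used. Step~4 only invokes Steps~2--3 for the polynomial approximants.
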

	
	\section{The perturbed $q$-Tsallis barrier}\label{sec:barrier}
	
	\begin{definition}[Perturbed $q$-Tsallis barrier]\label{def:phi_q_eta}
		For $q>1$, $\eta\geq 0$, $X\in\PD$:
		\[
		\phi_{q,\eta}(X)
		\;=\; -\log\det(X)
		\;+\; \frac{\eta}{q-1}\bigl[\tr(X^{-(q-1)})-n\bigr].
		\]
	\end{definition}
	
	\begin{proposition}[Boundary divergence and convexity]
		\label{prop:boundary_convexity}
		For every $q>1$ and $\eta\geq 0$, $\phi_{q,\eta}$ is strictly
		convex on $\PD$ and diverges as $\lambda_{n}(X)\to 0^{+}$.
	\end{proposition}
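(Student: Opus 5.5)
The plan is to write $\phi_{q,\eta}$ as a spectral function $\sum_i g(\lambda_i(X))$ with
\[
g(\lambda) \;=\; -\log\lambda + \frac{\eta}{q-1}\bigl(\lambda^{-(q-1)}-1\bigr), \qquad \lambda>0 ,
\]
and then treat convexity and boundary divergence separately.

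\emph{Convexity.} The scalar derivatives are
\[
g'(\lambda) = -\lambda^{-1} - \eta\lambda^{-q}, \qquad g''(\lambda) = \lambda^{-2} + \eta q\lambda^{-q-1} .
\]
Since $\eta\ge 0$ and $q>1$, we have $g''(\lambda)\ge \lambda^{-2}>0$ on $(0,\infty)$, so $g$ is strictly convex there. \Cref{lem:davis} then gives convexity of $\phi_{q,\eta}$ on $\PD$.

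For strict convexity I will use \cref{lem:daleckii_krein} directly. The divided difference $g^{[2]}(\lambda_i,\lambda_j)$ is a mean value of $g''$ between $\lambda_i$ and $\lambda_j$, so it is strictly positive. Hence
\[
D^2\phi_{q,\eta}(X)[H,H] = \sum_{i,j} g^{[2]}(\lambda_i,\lambda_j)\,\widetilde H_{ij}^2 > 0
\qquad\text{for all } H\ne 0 .
\]
A positive definite Hessian everywhere on the convex open set $\PD$ gives strict convexity. A simpler route is also available: $-\log\det$ is already strictly convex by \cref{thm:logdet_sc}, since its Hessian is $\norm{X^{-1/2}HX^{-1/2}}_\Frob^2$. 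The perturbation term is the spectral function of the convex function $\lambda\mapsto\lambda^{-(q-1)}$, scaled by $\eta/(q-1)\ge 0$, so it is convex by \cref{lem:davis}. A strictly convex function plus a convex one is strictly convex.

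\emph{Boundary divergence.} Fix a sequence with $\lambda_n(X_k)\to 0^+$. I will assume the iterates stay bounded, i.e.\ $\lambda_1(X_k)\le M$; this is the natural reading of approach to $\partial\PSD$, since with $\lambda_1$ unbounded the $-\log$ terms could compensate. The terms $g(\lambda_i)$ for $i<n$ are then bounded below by $\min_{(0,M]} g$. This minimum is finite: $g$ is decreasing on $(0,\infty)$ because $g'<0$, so it equals $g(M)$. Meanwhile $g(\lambda_n)\ge -\log\lambda_n \to +\infty$, using that the Tsallis term $\frac{\eta}{q-1}(\lambda^{-(q-1)}-1)$ is at least $-\frac{\eta}{q-1}$. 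Actually that term itself diverges like $\lambda_n^{-(q-1)}$ when $\eta>0$, which gives a faster blow-up than the logarithm.

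The only delicate point is this boundedness caveat in the divergence claim. Without it the statement needs the interpretation "$X_k\to\bar X\in\partial\PSD$", under which boundedness is automatic. I will state the argument in that form.
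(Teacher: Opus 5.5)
Your proof is correct and has the same skeleton as the paper's. You write $\phi_{q,\eta}=\sum_i g(\lambda_i)$, check $g''(\lambda)=\lambda^{-2}+\eta q\lambda^{-(q+1)}>0$, invoke \cref{lem:davis}, and read off divergence from $g(\lambda)\to+\infty$ at $0^+$. Where you depart from the paper, you are more careful.

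\textbf{Strict convexity.} \Cref{lem:davis} as stated yields only convexity, and the paper simply asserts that strict convexity of $g$ transfers to $\phi_{q,\eta}$. Your Daleckii--Krein argument closes this step: each $g^{[2]}(\lambda_i,\lambda_j)=g''(\xi_{ij})>0$. Your alternative, strict convexity of $-\log\det$ plus a convex perturbation, also closes it. Your $g$ also carries the constant $-\tfrac{\eta}{q-1}$, so $\phi_{q,\eta}=\sum_i g(\lambda_i)$ holds exactly. The paper's displayed decomposition, by contrast, counts the log term twice.

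\textbf{Boundedness caveat.} This is not pedantry. The paper's argument ignores it, and the divergence claim is false without it. For $\eta=0$, $X=\diag(1/\eps,\eps)$ gives $\phi_{0}(X)=0$ while $\lambda_n\to0^+$. For $\eta>0$ and $n=2$, take $\lambda_2=\eps$ and $\lambda_1=\eps^{-1}\exp\bigl(\tfrac{\eta}{q-1}\eps^{-(q-1)}\bigr)$; then $\phi_{q,\eta}=\tfrac{\eta}{q-1}\bigl(\lambda_1^{-(q-1)}-2\bigr)$ stays bounded. Reading the claim as $X_k\to\bar X\in\partial\PSD$, as in \cref{def:scb}, is the right fix.

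\textbf{Small slip.} The inequality $g(\lambda_n)\ge-\log\lambda_n$ holds because the Tsallis term is nonnegative once $\lambda_n\le 1$. The lower bound $-\tfrac{\eta}{q-1}$ you cite gives only $g(\lambda_n)\ge-\log\lambda_n-\tfrac{\eta}{q-1}$, which suffices equally well.
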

	\begin{proof}
		The scalar function $g(\lambda)=-\log\lambda+\frac{\eta}{q-1}\lambda^{-(q-1)}$
		is strictly convex on $(0,\infty)$ for all $q>1$, $\eta\geq 0$
		(since $g^{\prime\prime}(\lambda)=\lambda^{-2}+\eta q\lambda^{-(q+1)}>0$),
		so $\phi_{q,\eta}(X)=\sum_{i}g(\lambda_{i}(X))-\sum_{i}\log\lambda_{i}(X)$
		is strictly convex by \cref{lem:davis}.
		Divergence follows from $g(\lambda)\to+\infty$ as $\lambda\to 0^{+}$
		(both $-\log\lambda$ and $\lambda^{-(q-1)}$ diverge), so
		$\phi_{q,\eta}(X)\to+\infty$ as $\lambda_{n}(X)\to 0^{+}$.
	\end{proof}
	
	\begin{proposition}[Gradient and Hessian]\label{prop:grad_hess}
		\begin{align}
			\Grad\phi_{q,\eta}(X)
			&= -X^{-1} - \eta X^{-q},
			\label{eq:grad_phi}\\
			\Hess\phi_{q,\eta}(X)[H,H]
			&= \inner{X^{-1}HX^{-1}}{H}
			+ \eta\sum_{i,j} q^{[2]}(\lambda_{i},\lambda_{j})\widetilde{H}_{ij}^{2},
			\label{eq:hess_phi}
		\end{align}
		where $q^{[2]}(\lambda,\mu)=(\lambda^{-q}-\mu^{-q})/(\mu-\lambda)$
		for $\lambda\neq\mu$, and $q\lambda^{-(q+1)}$ for $\lambda=\mu$.
	\end{proposition}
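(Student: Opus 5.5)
The plan is to split $\phi_{q,\eta}=\phi_{0}+\eta\,F$, where $F(X)=\frac{1}{q-1}\bigl[\tr(X^{-(q-1)})-n\bigr]$. The log-det part is already handled by \cref{thm:logdet_sc}: it contributes $-X^{-1}$ to the gradient and $\inner{X^{-1}HX^{-1}}{H}$ to the Hessian. Since differentiation is linear, it remains to compute the first and second derivatives of the spectral function $F(X)=\tr(g(X))$ with $g(\lambda)=\frac{1}{q-1}\lambda^{-(q-1)}$; the constant $-n/(q-1)$ plays no role.

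\textbf{Gradient.} For the gradient I will use the standard first-order spectral rule $D\,\tr(g(X))[H]=\tr(g'(X)H)$. One way to see it is the first-order Daleckii--Krein formula: in the eigenbasis, $D g(X)[H]$ has entries $g^{[1]}(\lambda_i,\lambda_j)\widetilde H_{ij}$, and taking the trace keeps only the diagonal terms $g'(\lambda_i)\widetilde H_{ii}$. Alternatively, for analytic $g$ one can differentiate $\tr(X^{k})$ term by term, using cyclicity of the trace. Since $g'(\lambda)=-\lambda^{-q}$, this gives $\Grad F(X)=-X^{-q}$, and hence \eqref{eq:grad_phi}.

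\textbf{Hessian.} For the Hessian I apply \cref{lem:daleckii_krein} to the same $g$, which gives $D^2F(X)[H,H]=\sum_{i,j}g^{[2]}(\lambda_i,\lambda_j)\widetilde H_{ij}^2$. The first divided difference of $g'$ is
\[
g^{[2]}(\lambda,\mu)=\frac{-\lambda^{-q}+\mu^{-q}}{\lambda-\mu}=\frac{\lambda^{-q}-\mu^{-q}}{\mu-\lambda},
\]
which matches the stated $q^{[2]}$. On the diagonal it equals $g''(\lambda)=q\lambda^{-(q+1)}$. Multiplying by $\eta$ and adding the log-det Hessian yields \eqref{eq:hess_phi}.

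\textbf{Checks.} I will verify that $q^{[2]}(\lambda,\mu)>0$, since $\lambda\mapsto\lambda^{-q}$ is decreasing. I will also verify that $q^{[2]}$ is continuous across $\lambda=\mu$, so that repeated eigenvalues cause no trouble. Finally, I will note that the result does not depend on the choice of eigenbasis $Q$: $\widetilde H$ changes only by an orthogonal change of basis within each eigenspace, and on such a block the divided difference is constant.

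\textbf{Main obstacle.} The only delicate point is justifying the Daleckii--Krein formula when $X$ has repeated eigenvalues. Here I rely on the fact that \cref{lem:daleckii_krein} is stated for $C^2$ functions $g$ on $(0,\infty)$, and $\lambda^{-(q-1)}$ is smooth there. Everything else is bookkeeping of signs in the divided difference.
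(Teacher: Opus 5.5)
Your proposal is correct and follows the derivation the paper relies on implicitly (the proposition is stated there without a separate proof): the log-det terms come from \cref{thm:logdet_sc}, and the Tsallis terms come from \cref{lem:daleckii_krein} applied to $g(\lambda)=\frac{1}{q-1}\lambda^{-(q-1)}$ with $g'(\lambda)=-\lambda^{-q}$, exactly as you do. Your reason for $q^{[2]}>0$, namely that $\lambda\mapsto\lambda^{-q}$ is decreasing, is the correct one; it is more accurate than the appeal to convexity of $\lambda^{-q}$ that the paper makes later in \cref{app:robustness}.
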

	
	\section{Self-concordance theorem}\label{sec:self_concordance}
	
	\begin{theorem}[Self-concordance of $\phi_{q,\eta}$ ---
		complete characterisation]\label{thm:self_concordance}
		\begin{enumerate}[label=(\roman*),leftmargin=2.5em,itemsep=0.3ex]
			\item \textbf{Unconditional regime:} For $q\in(1,2]$ and any $\eta\geq 0$,
			$\phi_{q,\eta}$ satisfies the differential self-concordance
			inequality on $\mathrm{int}(\PSD)$, with boundary divergence $\phi_{q,\eta}(X)\to+\infty$ as
			$X\to\partial\PSD$. The boundary $q=2$ is sharp:
			$\psi'(0)=q(2-q)\geq 0\iff q\leq 2$.
			\item \textbf{Global obstruction for $q>2$:} For any $q>2$ and any $\eta>0$,
			$\phi_{q,\eta}$ fails to be a standard self-concordant barrier on
			all of $\PD$, because the scalar inequality $\psi(u)\geq 0$
			(with $\psi(u):=2(1+qu)^{3/2}-2-q(q+1)u$) has a negative minimum
			$\psi_{\min}<0$ for $q>2$, which is violated for
			$u=\eta\lambda^{-(q-1)}\to 0^{+}$ as $\lambda\to\infty$.
			\item \textbf{Local sufficient condition for $q>2$:} If the iterates
			remain in a compact set $\{\lambda_{i}(X)\leq\Lambda\}$
			with $\Lambda<\infty$, then $\phi_{q,\eta}$ satisfies the scalar
			self-concordance inequality whenever
			\begin{equation}\label{eq:eta_max_def}
				\eta \;\geq\; \eta_{\min}(q,\Lambda)
				\;:=\; u_{\mathrm{zero}}(q)\cdot\Lambda^{q-1},
			\end{equation}
			where $u_{\mathrm{zero}}(q)$ is the unique positive zero of $\psi$
			beyond its minimum, given in closed form by~\eqref{eq:u_zero_proof}.
			With the normalisation $\Lambda=1$, define
			$\eta_{\max}(q):=u_{\mathrm{zero}}(q)$.
		\end{enumerate}
		Under~(i) or~(iii), on any compact subset
		$K\subset\mathrm{int}(\PSD)$ the function $\phi_{q,\eta}$ acts as
		an effective local barrier parameter $\nu_{\mathrm{eff}}(K)=O(n)$ on
		compact $K$ (\cref{app:nu_param}), recovering the same asymptotic
		iteration complexity class as the log-det barrier.
	\end{theorem}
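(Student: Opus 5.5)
The plan is to reduce everything to a scalar inequality in the eigenvalue variable, study that scalar inequality completely, and then lift it back to matrix directions. Write $g(\lambda)=-\log\lambda+\frac{\eta}{q-1}\lambda^{-(q-1)}$, so that $\phi_{q,\eta}(X)=\sum_i g(\lambda_i(X))$. Set $u:=\eta\lambda^{-(q-1)}\in(0,\infty)$. A direct computation gives
\[
g''(\lambda)=\lambda^{-2}(1+qu),\qquad g'''(\lambda)=-\lambda^{-3}\bigl(2+q(q+1)u\bigr).
\]
The powers of $\lambda$ cancel in $|g'''|\le 2(g'')^{3/2}$, so the one-dimensional self-concordance inequality is exactly $\psi(u)=2(1+qu)^{3/2}-2-q(q+1)u\ge 0$.

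\textbf{Analysis of $\psi$.} We have $\psi(0)=0$ and $\psi'(u)=3q(1+qu)^{1/2}-q(q+1)$, so $\psi'(0)=q(2-q)$. Moreover $\psi''(u)=\tfrac32 q^2(1+qu)^{-1/2}>0$, so $\psi$ is strictly convex.
\begin{itemize}
\item For $q\in(1,2]$: convexity together with $\psi(0)=0$ and $\psi'(0)\ge 0$ gives $\psi\ge 0$ on $[0,\infty)$. This is part (i). Boundary divergence is already \cref{prop:boundary_convexity}.
\item For $q>2$: $\psi'(0)<0$, so $\psi<0$ on some interval $(0,u_{\mathrm{zero}})$. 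The minimum sits at $1+qu^*=((q+1)/3)^2$. Since $\psi(u)\to\infty$, convexity yields a unique zero $u_{\mathrm{zero}}>u^*$, which I would obtain in closed form by squaring $2(1+qu)^{3/2}=2+q(q+1)u$ and solving the resulting cubic.
\item Part (ii): for fixed $\eta>0$, taking $\lambda\to\infty$ sends $u\to 0^+$, which lands in the negative region. The corresponding diagonal direction $H=e_ie_i^\top$ therefore violates the inequality.
\item Part (iii): $\lambda_i\le\Lambda$ gives $u\ge\eta\Lambda^{-(q-1)}$. Hence $\eta\ge u_{\mathrm{zero}}\Lambda^{q-1}$ forces $u\ge u_{\mathrm{zero}}$, and $\psi\ge0$ there because $\psi$ is increasing beyond its minimum.
\end{itemize}

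\textbf{Lifting to non-commuting directions.} For directions $H$ commuting with $X$, i.e.\ $\widetilde H$ diagonal, the inequality is a sum of scalar inequalities. Concavity of $t\mapsto t^{3/2}$ then converts $\sum_i|g'''(\lambda_i)|h_i^3\le 2\sum_i (g''(\lambda_i)h_i^2)^{3/2}$ into the required bound with $(\sum_i g''(\lambda_i) h_i^2)^{3/2}$, using $\sum_i a_i^{3/2}\le(\sum_i a_i)^{3/2}$.

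For general $H$ I would use the Daleckii--Krein third-order formula
\[
D^3F[H,H,H]=\sum_{i,j,k}g^{[3]}(\lambda_i,\lambda_j,\lambda_k)\,\widetilde H_{ij}\widetilde H_{jk}\widetilde H_{ki}.
\]
The log-det part is handled exactly as in \cref{thm:logdet_sc}. For the Tsallis part I would try to bound the divided differences by $|g^{[3]}(\lambda_i,\lambda_j,\lambda_k)|\le c\,(\cdots)$ in terms of products of $\sqrt{g^{[2]}}$ values. Then Cauchy--Schwarz on the cyclic sum, in the weighted form used to prove self-concordance of $-\log\det$ via $X^{-1/2}HX^{-1/2}$, should reduce the matrix inequality to the scalar one. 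This is the step I expect to be the main obstacle. Because $-\eta\lambda^{-q}$ is not operator monotone for $q>1$, \cite{faybusovichtsuchiya2017} is unavailable, and the divided-difference comparison must be checked by hand. My first attempt would be the integral representation $\lambda^{-q}=c_q\int_0^\infty t^{q-1}e^{-t\lambda}\,dt$, which expresses all divided differences as positive mixtures of exponentials and makes the required comparison termwise.

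\textbf{Effective barrier parameter.} By Cauchy--Schwarz in the eigenbasis,
\[
|D\phi[H]|^2\le\Bigl(\sum_i \frac{g'(\lambda_i)^2}{g''(\lambda_i)}\Bigr)\,D^2\phi[H,H].
\]
A direct computation gives $\frac{g'(\lambda)^2}{g''(\lambda)}=\frac{(1+u)^2}{1+qu}$. On a compact $K\subset\mathrm{int}(\PSD)$ the quantity $u$ ranges over a compact interval, so this ratio is bounded by a constant $c(K,q,\eta)$. Hence $\nu_{\mathrm{eff}}(K)\le c\,n=O(n)$, and \cref{thm:nn_complexity} gives the stated iteration bound.
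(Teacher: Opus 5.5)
\textbf{The lift to non-commuting directions is missing, and the route you sketch for it fails.} Your scalar analysis matches the paper's: the same derivatives, the same reduction to $\psi(u)\geq 0$, and the same use of $\psi(0)=0$, strict convexity and $\psi'(0)=q(2-q)$ for (i)--(iii). Your squaring route to $u_{\mathrm{zero}}$ also works: after dividing by $u$ it leaves $4q^{2}u^{2}-q[(q+1)^{2}-12]u+4(2-q)=0$, which has exactly one positive root for $q>2$.

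Part (i) asserts the inequality for every $H$, including directions that do not commute with $X$. The paper handles this in one sentence, claiming the worst case is rank-one in the eigenbasis. It cites Faybusovich--Tsuchiya, whose hypothesis (matrix monotonicity of $g'$) the paper itself says fails for $q>1$. You rightly do not accept this, but you put nothing in its place.

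Your proposed fix cannot close the argument termwise. In the Laplace representation, each piece $E_t(\lambda)=e^{-t\lambda}$ has $|E_t'''|/(E_t''\lambda^{-1})=t\lambda$, which is unbounded. Concretely, $-\log\lambda+ce^{-t\lambda}$ with $c=e^{t}/t^{2}$ violates the scalar inequality at $\lambda=1$ once $t>4\sqrt2-2$. (Separately, $\sum_i a_i^{3/2}\leq(\sum_i a_i)^{3/2}$ follows from $a_i^{3/2}\leq a_i(\sum_j a_j)^{1/2}$, not from concavity of $t^{3/2}$.)

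\textbf{The missing idea is operator convexity of $g$ itself, not monotonicity of $g'$.} The map $\lambda\mapsto\lambda^{-(q-1)}$ is operator convex exactly for $q\in(1,2]$. For $0<r<1$ this shows through the Stieltjes representation $\lambda^{-r}=\frac{\sin\pi r}{\pi}\int_{0}^{\infty}s^{-r}(\lambda+s)^{-1}\,ds$, and resolvents are the right building blocks.

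Take $R_s(X)=\tr(X+sI)^{-1}$, $Y=X+sI$ and $K=Y^{-1/2}HY^{-1/2}$. Then $D^{2}R_s[H,H]=2\tr(K^{2}Y^{-1})$ and $D^{3}R_s[H,H,H]=-6\tr(K^{3}Y^{-1})$. Hence $|D^{3}R_s[H,H,H]|\leq 3\opnorm{K}\,D^{2}R_s[H,H]$, with $\opnorm{\cdot}$ the spectral norm. Since $Y\succeq X$, $\opnorm{K}\leq\opnorm{X^{-1/2}HX^{-1/2}}\leq\sqrt{a}$, where $a:=\tr(X^{-1}HX^{-1}H)$.

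Integrating (or taking $s=0$ when $q=2$), $G:=\frac{1}{q-1}\tr X^{-(q-1)}$ satisfies $|D^{3}G[H,H,H]|\leq 3\sqrt{a}\,D^{2}G[H,H]$. With $b:=\eta D^{2}G[H,H]\geq 0$,
\[
|D^{3}\phi_{q,\eta}(X)[H,H,H]|\;\leq\;2a^{3/2}+3a^{1/2}b\;\leq\;2(a+b)^{3/2}=2\bigl(D^{2}\phi_{q,\eta}(X)[H,H]\bigr)^{3/2}.
\]
This proves (i) for every $H$. It also explains the threshold: $q\leq 2$ is exactly the operator-convexity range, matching the scalar compatibility constant $q+1\leq 3$.

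\textbf{Your barrier-parameter argument is a different and stronger route than the paper's.} The paper only evaluates the ratio at $H=X$ and leaves general $H$ open (\cref{rem:general_H}). Your Cauchy--Schwarz argument, after dropping the nonnegative off-diagonal Daleckii--Krein terms, covers every $H$. Equality is attained by a diagonal $\widetilde H$ with $\widetilde H_{ii}\propto g'(\lambda_i)/g''(\lambda_i)$, so the exact local parameter at $X$ is $\sum_i(1+u_i)^{2}/(1+qu_i)$.

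Because $u\mapsto(1+u)^{2}/(1+qu)$ is strictly convex, this quantity exceeds the paper's $n\,C_q(s(X))$ whenever the $u_i$ differ. Over all directions, the constant must therefore be governed by $\max_i u_i$, i.e.\ by $\lambda_{\min}$ on $K$, as in your bound, not by the average $s(X)$.
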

	
	\begin{remark}[Practical implication]\label{rem:practical}
		In interior-point practice, the iterates remain strictly in the
		interior of $\PSD$ throughout the algorithm, so the spectral radius is
		bounded on any compact feasible set. The unconditional regime
		$q\in(1,2]$ covers all values of practical interest; in particular,
		values near $q=1.5$ arise naturally in Tsallis-regularised learning
		problems \cite{tsallis1988,tsallis2009,amidetal2019}, and the
		global obstruction for $q>2$ is a theoretical boundary rather
		than an algorithmic obstacle.
	\end{remark}
	
	\begin{remark}[Connection to normalisability of $q$-exponential distributions]\label{rem:q_exp_norm}
		The threshold $q=2$ identified in \cref{thm:self_concordance} has a
		natural interpretation in nonextensive statistical mechanics: the
		$q$-exponential function $e_{q}(x):=[1+(1-q)x]_{+}^{1/(1-q)}$, which
		plays the role of the Boltzmann weight in Tsallis statistics
		\cite{tsallis1988,tsallis2009}, is normalisable on $\R$ if and only if
		$q<2$. The coincidence of this integrability threshold with the exact
		boundary of the self-concordance regime suggests that the algebraic
		structure of Tsallis statistics is reflected in the geometry of the
		barrier, and may guide the choice of $q$ in applications where both
		statistical and optimisation considerations are relevant.
	\end{remark}
	
	\begin{remark}[Behaviour of $\eta_{\max}(q)$]\label{rem:eta_max}
		$q\mapsto\eta_{\max}(q)=u_{\mathrm{zero}}(q)$ is strictly increasing
		for $q>2$, with $\eta_{\max}(2^{+})\to 0$,
		$\eta_{\max}(2.5)\approx 0.296$, $\eta_{\max}(3)\approx 0.539$,
		and $\eta_{\max}(q)\to 1$ as $q\to\infty$ (see~\eqref{eq:u_zero_proof}).
		In practice, for the unconditional regime $q\in(1,2]$ any
		$\eta\geq 0$ is admissible, so this threshold is relevant only
		when $q>2$ is used on a bounded spectral domain.
	\end{remark}
	
	\subsection{Proof strategy}
	
	The proof reduces to the scalar function
	$g(\lambda)=-\log\lambda+\frac{\eta}{q-1}\lambda^{-(q-1)}$ and the
	associated function $\psi(u)=2(1+qu)^{3/2}-2-q(q+1)u$ where
	$u=\eta\lambda^{-(q-1)}$. Since $\psi''(u)=3q^{2}/(2(1+qu)^{1/2})>0$,
	$\psi$ is strictly convex with $\psi(0)=0$. Computing:
	\[
	\psi'(0) \;=\; q(2-q) \;\geq\; 0 \;\iff\; q\leq 2.
	\]
	For $q\in(1,2]$: $\psi'(0)\geq 0$, so the strictly convex $\psi$
	with $\psi(0)=0$ is non-decreasing at the origin and hence
	$\psi(u)\geq 0$ for all $u\geq 0$, independently of $\eta$.
	This proves part~(i).
	
	For $q>2$: $\psi'(0)<0$, so $\psi$ decreases initially and attains
	a negative minimum at $u_{*}=\bigl[((q+1)/3)^{2}-1\bigr]/q$;
	since $u\to 0^{+}$ as $\lambda\to\infty$ for any $\eta>0$, the
	inequality $\psi(u)\geq 0$ fails for large $\lambda$, proving part~(ii).
	
	Part~(iii) follows from the observation that $\psi$ returns to zero at
	a unique positive value $u_{\mathrm{zero}}(q)>0$ (by convexity and
	$\psi\to+\infty$), and $\psi(u)\geq 0$ for all $u\geq u_{\mathrm{zero}}$.
	Translating back: the SC inequality holds when
	$\eta\lambda^{-(q-1)}\geq u_{\mathrm{zero}}(q)$, i.e.\ when
	$\lambda\leq(\eta/u_{\mathrm{zero}})^{1/(q-1)}$.
	For iterates with $\lambda_{i}(X)\leq\Lambda$, this is ensured by
	$\eta\geq u_{\mathrm{zero}}(q)\cdot\Lambda^{q-1}$.
	The closed form of $u_{\mathrm{zero}}(q)$ and the full derivation
	appear in \cref{app:proof_reduction} and \cref{app:scalar_sc}.
	
	\section{Spectral robustness theorem}\label{sec:robustness}
	
	\begin{theorem}[Spectral robustness of the central path]
		\label{thm:robustness}
		Under the assumptions of \cref{thm:self_concordance}, let
		$M:=\Hess\phi_{q,\eta}(X^{*})$ and let
		$\lambda_{1}\geq\cdots\geq\lambda_{n}>0$ be the eigenvalues of $X^{*}$.
		For every perturbation $\Delta b\in\R^{m}$,
		\begin{equation}\label{eq:robustness_bound}
			\norm{X^{*}(b+\Delta b)-X^{*}(b)}_{\Frob}
			\;\leq\;
			\frac{C_{\mathcal{A}}}{\mu}\cdot
			\frac{\lambda_{\max}(X^{*})^{2}}
			{1+\eta q\,\lambda_{\max}(X^{*})^{-(q-1)}}\,
			\norm{\Delta b}_{2}
			\;+\; o(\norm{\Delta b}_{2}),
		\end{equation}
		where $C_{\mathcal{A}}>0$ depends on $\mathcal{A}$ and
		$\lambda_{\min}(X^{*})$.  The bound~\eqref{eq:robustness_bound}
		is strictly smaller than the $\eta=0$ bound
		$C_{\mathcal{A}}\mu^{-1}\lambda_{\max}^{2}\|\Delta b\|_{2}$
		for every $\eta>0$.
		Moreover, in each eigendirection $H=e_{k}e_{k}^{\top}$ of $X^{*}$:
		\begin{equation}\label{eq:directional_bound}
			\bigl|[\Delta X]_{kk}\bigr|
			\;\leq\;
			\frac{C_{\mathcal{A}}^{(k)}}{\mu}\cdot
			\frac{\lambda_{k}^{2}}
			{1+\eta q\,\lambda_{k}^{-(q-1)}}\,
			\norm{\Delta b}_{2},
		\end{equation}
		and as $\lambda_{k}\to 0^{+}$ (boundary stiffening; small-eigenvalue modes are strongly attenuated):
		\begin{equation}\label{eq:decay_small_lambda}
			\frac{\lambda_{k}^{2}}{1+\eta q\,\lambda_{k}^{-(q-1)}}
			\;\sim\;
			\frac{\lambda_{k}^{q+1}}{\eta q}
			\;\longrightarrow\; 0,
		\end{equation}
		at rate $q+1>2$, strictly faster than the $\eta=0$ rate of $2$.
		The bound recovers the log-det sensitivity as $\eta\to 0^{+}$
		or $q\to 1^{+}$.
	\end{theorem}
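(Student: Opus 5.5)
The plan is to obtain \eqref{eq:robustness_bound} by linearising the optimality conditions of the barrier subproblem with respect to $b$ and then reading off the spectrum of $M$ in the eigenbasis of $X^{*}$. For fixed $\mu>0$, the point $X^{*}(b)$ minimises $\inner{C}{X}+\mu\,\phi_{q,\eta}(X)$ subject to $\mathcal{A}(X)=b$. Its KKT system is
\[
C-\mathcal{A}^{*}(y)+\mu\Grad\phi_{q,\eta}(X)=0,\qquad \mathcal{A}(X)=b .
\]
I will assume $\mathcal{A}$ is surjective, which the constant $C_{\mathcal{A}}$ is meant to encode. By \cref{prop:boundary_convexity} the Hessian $M$ is positive definite, so the Jacobian of this system is nonsingular. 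The implicit function theorem then makes $b\mapsto(X^{*}(b),y(b))$ differentiable, and a first-order expansion produces the $o(\norm{\Delta b}_{2})$ remainder.

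Differentiating the system gives
\[
\mu M\,\Delta X=\mathcal{A}^{*}(\Delta y),\qquad \mathcal{A}(\Delta X)=\Delta b .
\]
Hence $\Delta X=(\mu M)^{-1}\mathcal{A}^{*}(\Delta y)$, with $\Delta y=\mu\,(\mathcal{A}M^{-1}\mathcal{A}^{*})^{-1}\Delta b$. I will bound $\norm{\mathcal{A}^{*}(\Delta y)}_{\Frob}$ by a constant times $\norm{\Delta b}_{2}$. That constant involves $\norm{\mathcal{A}}$ and the smallest eigenvalue of $\mathcal{A}M^{-1}\mathcal{A}^{*}$, which is controlled from below through $\lambda_{\min}(X^{*})$. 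I will absorb all of it, together with the $\mu$-normalisation, into $C_{\mathcal{A}}$, matching the form of \eqref{eq:robustness_bound}. This leaves $\norm{\Delta X}_{\Frob}\le \mu^{-1}C_{\mathcal{A}}\,\opnorm{M^{-1}}\,\norm{\Delta b}_{2}$.

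The key step is the spectral computation of $\opnorm{M^{-1}}$. By \cref{prop:grad_hess} and \cref{lem:daleckii_krein}, in the eigenbasis of $X^{*}$ the operator $M$ is diagonal on the coordinates $\widetilde{H}_{ij}$, with weights
\[
m_{ij}=\frac{1}{\lambda_{i}\lambda_{j}}+\eta\,q^{[2]}(\lambda_{i},\lambda_{j}).
\]
I then need the minimum of $m_{ij}$, and I expect this to be the main technical point. The divided difference $q^{[2]}(\lambda,\mu)$ equals $\int_{0}^{1} q\,(t\lambda+(1-t)\mu)^{-(q+1)}\,dt$, so it is decreasing in each argument. The term $1/(\lambda_{i}\lambda_{j})$ is also decreasing in each argument. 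Therefore the minimum is attained at $i=j=1$, giving
\[
\min_{i,j} m_{ij}=\lambda_{\max}^{-2}\bigl(1+\eta q\lambda_{\max}^{-(q-1)}\bigr),
\qquad
\opnorm{M^{-1}}=\frac{\lambda_{\max}^{2}}{1+\eta q\lambda_{\max}^{-(q-1)}} .
\]
This yields \eqref{eq:robustness_bound}. Strictness for $\eta>0$ is immediate, since the denominator is then greater than $1$.

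For \eqref{eq:directional_bound}, I will use the diagonal structure of $M$ directly. Writing $V:=\mu^{-1}\mathcal{A}^{*}(\Delta y)$ in the eigenbasis, the $(k,k)$ entry of $\Delta X$ is $V_{kk}/m_{kk}$, where
\[
m_{kk}=\lambda_{k}^{-2}\bigl(1+\eta q\lambda_{k}^{-(q-1)}\bigr).
\]
Bounding $|V_{kk}|$ by $\mu^{-1}C_{\mathcal{A}}^{(k)}\norm{\Delta b}_{2}$ then gives the estimate. The asymptotics \eqref{eq:decay_small_lambda} follow because $\eta q\lambda_{k}^{-(q-1)}$ dominates $1$ as $\lambda_{k}\to0^{+}$. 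Finally, the $\eta\to0^{+}$ limit sends the denominator to $1$, recovering the log-det sensitivity. For the $q\to1^{+}$ limit, I will read it in the sense that the rate $q+1$ tends to the log-det rate $2$.
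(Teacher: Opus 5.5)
You follow the paper's proof step for step: implicit differentiation of the central-path KKT system, elimination of $\Delta y$, sub-multiplicativity to isolate $\opnorm{M^{-1}}$, and the diagonal action of $M$ on the eigen-coordinates. In two places you are more careful than the paper. First, your elimination shows that the explicit $\mu$ cancels, $\Delta X=M^{-1}\mathcal{A}^{*}(\mathcal{A}M^{-1}\mathcal{A}^{*})^{-1}\Delta b$; the paper's formula carries a spurious $1/\mu$ and violates $\mathcal{A}(\Delta X)=\Delta b$. Second, your integral representation of $q^{[2]}$ correctly gives $m_{ij}\geq m_{11}=a_{1}$, whereas the paper's chain $m_{ij}\geq\lambda_{\max}^{-2}\geq a_{1}$ is backwards in its second inequality when $\eta>0$.

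The genuine gap is the step ``strictness for $\eta>0$ is immediate, since the denominator is then greater than $1$'', together with the attenuation reading of the directional bound. By your own construction, $C_{\mathcal{A}}$ contains $\norm{(\mathcal{A}M^{-1}\mathcal{A}^{*})^{-1}}\leq\lambda_{\max}(M)/\lambda_{\min}(\mathcal{A}\mathcal{A}^{*})$. By the same monotonicity you used, $\lambda_{\max}(M)=m_{nn}=\lambda_{n}^{-2}\bigl(1+\eta q\lambda_{n}^{-(q-1)}\bigr)$. Tracking $\eta$ honestly, your chain gives
\[
\norm{\Delta X}_{\Frob}\leq\kappa(X^{*})^{2}\,\frac{1+\eta q\lambda_{n}^{-(q-1)}}{1+\eta q\lambda_{1}^{-(q-1)}}\,\frac{\norm{\mathcal{A}}}{\lambda_{\min}(\mathcal{A}\mathcal{A}^{*})}\,\norm{\Delta b}_{2}.
\]
Since $\lambda_{n}\leq\lambda_{1}$, this is never smaller than its $\eta=0$ value, so the $\eta$-dependence goes the wrong way.

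No smarter constant repairs this. $\Delta X$ is the $M$-least-norm solution of $\mathcal{A}(\Delta X)=\Delta b$, so it is invariant under $M\mapsto cM$ and always satisfies $\norm{\Delta X}_{\Frob}\geq\norm{\Delta b}_{2}/\norm{\mathcal{A}}$. Fix $\mathcal{A}$ and $\bar X\succ0$, and take $b=\mathcal{A}(\bar X)$ and $C=-\mu\Grad\phi_{q,\eta}(\bar X)$, so that $\bar X$ is the central point for every $\eta$. Letting $\eta\to\infty$, any $C_{\mathcal{A}}$ depending only on $\mathcal{A}$ and $\lambda_{\min}(\bar X)$ would push the right-hand side of \eqref{eq:robustness_bound} below this lower bound. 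The limit $q\to1^{+}$ shows the same thing: $\phi_{q,\eta}\to(1+\eta)(-\log\det X)$ has exactly the log-det sensitivity at a given point, yet the displayed factor tends to $\lambda^{2}/(1+\eta)$. Your hedge there was warranted, but the discrepancy is a symptom of this gap.

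The same objection applies to $C_{\mathcal{A}}^{(k)}$, which hides $V_{kk}$ and so depends on $\eta$ through $\Delta y$. The paper's ``monotone improvement'' step makes exactly the same move, so you inherit the gap rather than introduce it. What this route actually proves is the inequality with an $\eta$-dependent constant, or a scale-invariant estimate such as $\norm{\Delta X}_{\Frob}\leq\bigl(\lambda_{\max}(M)/\lambda_{\min}(M)\bigr)^{1/2}\lambda_{\min}(\mathcal{A}\mathcal{A}^{*})^{-1/2}\norm{\Delta b}_{2}$.
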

	
	\begin{remark}[Mechanism and empirical validation]
		\label{rem:robustness_empirical}
		The bound~\eqref{eq:directional_bound} reveals that the robustness
		attenuates the modal amplification most strongly along the
		\emph{small}-eigenvalue directions
		For $q=1.5$, $\eta=1$, $\lambda_{\min}(X^{*})=10^{-3}$:
		the directional bound~\eqref{eq:directional_bound} gives a factor
		$\lambda_{\min}^{2}/(1+\eta q\lambda_{\min}^{-(q-1)})
		\approx 2.1\times 10^{-8}$, versus $\lambda_{\min}^{2}=10^{-6}$
		for $\eta=0$ --- a $97.9\%$ reduction in that direction.
		The global bound~\eqref{eq:robustness_bound} gives a more moderate
		but still monotone improvement: $13\%$ for the heavy-tail profile
		($\lambda_{\max}=100$) and $55\%$ for uniform spectra
		($\lambda_{\max}\approx 1.5$). These predictions are consistent with
		the empirical reductions of $41$--$83\%$ in \cref{sec:experiments}
		(\cref{tab:robustness_larger_n}), where the practical gain averages
		over directions weighted by the constraint structure of
		$\mathcal{A}$.
	\end{remark}

	
	\begin{corollary}[Condition-number scaling of spectral robustness]
		\label{cor:kappa_scaling}
		Under the hypotheses of \cref{thm:robustness}, normalise
		$\lambda_{\max}(X^{*})=1$ and let
		$\kappa:=\kappa(X^{*})=\lambda_{\max}(X^{*})/\lambda_{\min}(X^{*})$.
		In the eigendirection $e_{n}e_{n}^{\top}$ corresponding to
		$\lambda_{\min}(X^{*})=\kappa^{-1}$, the sensitivity satisfies
		\begin{equation}\label{eq:kappa_bound}
			\bigl|[\Delta X]_{nn}\bigr|
			\;\leq\;
			\frac{C_{\mathcal{A}}^{(n)}}{\mu}\cdot
			\frac{\kappa^{-2}}{1+\eta q\,\kappa^{q-1}}\,
			\|\Delta b\|_{2}.
		\end{equation}
		The improvement factor relative to $\eta=0$ is
		\begin{equation}\label{eq:kappa_ratio}
			\rho(\kappa)
			\;:=\;
			\frac{1}{1+\eta q\,\kappa^{q-1}}
			\;\sim\;
			\frac{1}{\eta q}\,\kappa^{-(q-1)}
			\qquad(\kappa\to\infty),
		\end{equation}
		which decays as $\kappa^{-(q-1)}$ for large $\kappa$.
		In particular: (i)~the global bound~\eqref{eq:robustness_bound}
		improves by the $\kappa$-independent factor $(1+\eta q)^{-1}$;
		(ii)~the directional bound~\eqref{eq:kappa_bound} improves by
		$\rho(\kappa)$, decaying at rate $\kappa^{-(q-1)}$;
		(iii)~at $q=1.5$, $\eta=1$, $\kappa=100$: $\rho(100)=0.0625$
		($93.8\%$ reduction); at $\kappa=1000$:
		$\rho(1000)\approx 0.021$ ($97.9\%$ reduction).
	\end{corollary}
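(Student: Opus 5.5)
The corollary is a specialisation of the directional bound \eqref{eq:directional_bound} of \cref{thm:robustness} to the smallest-eigenvalue direction, followed by elementary asymptotics. Under the normalisation $\lambda_{\max}(X^{*})=1$ we have $\lambda_{\min}(X^{*})=\lambda_{n}=\kappa^{-1}$. Taking $k=n$ in \eqref{eq:directional_bound}, we substitute $\lambda_{n}^{2}=\kappa^{-2}$ and $\lambda_{n}^{-(q-1)}=\kappa^{q-1}$. This gives \eqref{eq:kappa_bound} directly, with the same constant $C_{\mathcal{A}}^{(n)}$.

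Next comes the improvement factor. The $\eta=0$ directional bound is $C_{\mathcal{A}}^{(n)}\mu^{-1}\kappa^{-2}\|\Delta b\|_{2}$, since \cref{thm:robustness} states that it recovers the log-det sensitivity as $\eta\to0^{+}$. I will treat $C_{\mathcal{A}}^{(n)}$ as independent of $\eta$. Dividing the two bounds then gives $\rho(\kappa)=(1+\eta q\kappa^{q-1})^{-1}$. For the asymptotics, note that $q>1$ forces $\kappa^{q-1}\to\infty$. Writing $\rho(\kappa)=(\eta q\kappa^{q-1})^{-1}\bigl(1+(\eta q\kappa^{q-1})^{-1}\bigr)^{-1}$ yields $\rho(\kappa)\sim(\eta q)^{-1}\kappa^{-(q-1)}$, which is \eqref{eq:kappa_ratio} and proves item~(ii).

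For item~(i), I apply the same substitution to the global bound \eqref{eq:robustness_bound} with $\lambda_{\max}(X^{*})=1$. Its amplification factor becomes $1/(1+\eta q)$, which does not depend on $\kappa$. Dividing by the $\eta=0$ bound $C_{\mathcal{A}}\mu^{-1}\|\Delta b\|_{2}$ gives the factor $(1+\eta q)^{-1}$.

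Item~(iii) is direct evaluation. With $q=1.5$ and $\eta=1$ we get $\rho(100)=1/(1+1.5\cdot10)=1/16=0.0625$. At $\kappa=1000$, $\rho=1/(1+1.5\sqrt{1000})\approx1/48.4\approx0.021$, a $97.9\%$ reduction.

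The main obstacle is justifying that the constant $C_{\mathcal{A}}^{(n)}$ is the same for every $\eta$, so that the ratio of bounds really equals $\rho$. \cref{thm:robustness} says only that $C_{\mathcal{A}}$ depends on $\mathcal{A}$ and $\lambda_{\min}(X^{*})$, not on $\eta$. I would rely on that wording and present $\rho$ explicitly as a ratio of upper bounds rather than of actual sensitivities.
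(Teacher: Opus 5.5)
Your proposal is correct and follows the paper's proof: substitute $\lambda_{\min}=\kappa^{-1}$ into \eqref{eq:directional_bound}, divide by the $\eta=0$ bound $\kappa^{-2}$ to get $\rho(\kappa)=(1+\eta q\,\kappa^{q-1})^{-1}$, and read off the asymptotics from $\eta q\,\kappa^{q-1}\gg 1$. You go slightly beyond the paper by checking items (i) and (iii) explicitly and by noting that $\rho$ is only a ratio of upper bounds with $C_{\mathcal{A}}^{(n)}$ treated as $\eta$-independent; the paper makes that assumption silently, and it is a delicate one, since the factor $(\mathcal{A}M^{-1}\mathcal{A}^{*})^{-1}$ in \eqref{eq:Delta_X} itself depends on $\eta$.
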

	
	\begin{proof}
		Set $\lambda_{\min}=\kappa^{-1}$ in~\eqref{eq:directional_bound}:
		\[
		\frac{\lambda_{\min}^{2}}{1+\eta q\,\lambda_{\min}^{-(q-1)}}
		=\frac{\kappa^{-2}}{1+\eta q\,\kappa^{q-1}},
		\]
		giving~\eqref{eq:kappa_bound}. Dividing by the $\eta=0$ bound
		$\kappa^{-2}$ gives $\rho(\kappa)=(1+\eta q\,\kappa^{q-1})^{-1}$,
		and the asymptotic follows from $\eta q\kappa^{q-1}\gg 1$.
	\end{proof}
	
	\begin{remark}[Connection to empirical results]\label{rem:kappa_empirical}
		\Cref{cor:kappa_scaling} provides the quantitative bridge between
		\cref{thm:robustness} and the $41$--$83\%$ sensitivity reductions
		in Tables~1 and~3. The heavy-tail profiles have
		$\kappa(X^{*})\in[10^{3},10^{5}]$, for which
		$\rho(\kappa)\in[0.006,0.021]$ at $q=1.5$, $\eta=1$---consistent
		with the observed reductions. The scaling $\kappa^{-(q-1)}$ also
		explains why the gain grows with $n$ in Table~3: larger $n$ yields
		larger $\kappa$, hence smaller $\rho(\kappa)$. Experiment~G
		(\cref{sec:exp_G}) verifies the $\kappa^{-(q-1)}$ scaling directly.
	\end{remark}

	\begin{corollary}[Anisotropic spectral damping of the Newton system]
		\label{cor:anisotropic_damping}
		Under the hypotheses of \cref{thm:robustness}, let
		$X^{*}=Q\operatorname{diag}(\lambda_{1},\dots,\lambda_{n})Q^{\top}$
		with $\lambda_{1}\geq\cdots\geq\lambda_{n}>0$.
		In the eigenbasis of $X^{*}$, the Hessian operator
		$M=\Hess\phi_{q,\eta}(X^{*})$ acts on each spectral mode $(i,j)$
		with response coefficient
		\begin{equation}\label{eq:modal_response}
			M_{ij}
			\;=\;
			\lambda_{i}^{-1}\lambda_{j}^{-1}
			\;+\;
			\eta\,q^{[2]}(\lambda_{i},\lambda_{j})
			\;\geq\; 0,
		\end{equation}
		where $q^{[2]}(\lambda_{i},\lambda_{j})\geq 0$ is the divided
		difference of $t\mapsto t^{-q}$.
		Along diagonal modes $i=j$, the modal response reduces to
		\begin{equation}\label{eq:diagonal_modal}
			M_{ii}^{-1}
			\;=\;
			\frac{\lambda_{i}^{2}}{1+\eta q\,\lambda_{i}^{-(q-1)}},
		\end{equation}
		which is a strictly decreasing function of $\lambda_{i}$.
		Consequently, the Tsallis perturbation $(\eta>0)$ introduces an
		Consequently, the Tsallis perturbation ($\eta>0$) introduces an
		\emph{anisotropic stiffening near the cone boundary}:
		small-eigenvalue modes ($\lambda_{i}\to 0$) have
		$M_{ii}=\lambda_{i}^{-2}(1+\eta q\,\lambda_{i}^{-(q-1)})\to\infty$,
		so $M_{ii}^{-1}\sim(\eta q)^{-1}\lambda_{i}^{q+1}\to 0$~---~the
		Hessian becomes extremely rigid in those directions, strongly
		attenuating the corresponding components of $\Delta X$.
		High-eigenvalue modes ($\lambda_{i}$ large) experience only
		mild stiffening: $(1+\eta q\,\lambda_{i}^{-(q-1)})^{-1}\to 1$.
		This boundary stiffening is the mechanism responsible for the
		spectral robustness improvements quantified in
		\cref{thm:robustness,cor:kappa_scaling}.
	\end{corollary}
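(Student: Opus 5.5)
Diagonalise first. Write $X^{*}=Q\,\diag(\lambda)\,Q^{\top}$ and $\widetilde H=Q^{\top}HQ$. The log-det term in \cref{prop:grad_hess} is invariant under orthogonal conjugation, since $\inner{X^{-1}HX^{-1}}{H}=\sum_{i,j}\lambda_i^{-1}\lambda_j^{-1}\widetilde H_{ij}^2$. Combined with \eqref{eq:hess_phi} this gives
\[
\Hess\phi_{q,\eta}(X^{*})[H,H]=\sum_{i,j}\Bigl(\lambda_i^{-1}\lambda_j^{-1}+\eta\,q^{[2]}(\lambda_i,\lambda_j)\Bigr)\widetilde H_{ij}^2 .
\]
So $M$ is diagonal in the basis of rank-one modes $Q(e_ie_j^{\top}+e_je_i^{\top})Q^{\top}$, with coefficient $M_{ij}$ as in \eqref{eq:modal_response}. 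The off-diagonal cross terms vanish because each sum is a weighted sum of squares $\widetilde H_{ij}^2$; polarising the quadratic form then gives the diagonal bilinear form.

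Next, nonnegativity. The term $\lambda_i^{-1}\lambda_j^{-1}$ is positive. For $q^{[2]}(\lambda_i,\lambda_j)=(\lambda_i^{-q}-\lambda_j^{-q})/(\lambda_j-\lambda_i)$, the map $t\mapsto t^{-q}$ is strictly decreasing on $(0,\infty)$ for $q>0$. Hence numerator and denominator have the same sign when $\lambda_i\neq\lambda_j$, and on the diagonal the value is $q\lambda^{-(q+1)}>0$. Alternatively, the mean value theorem writes $q^{[2]}=q\xi^{-(q+1)}$ for some $\xi$ between $\lambda_i$ and $\lambda_j$. Either way $M_{ij}>0$, which in particular gives $\ge 0$.

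Now the diagonal modes. Setting $i=j$ gives $M_{ii}=\lambda_i^{-2}+\eta q\lambda_i^{-(q+1)}=\lambda_i^{-2}\bigl(1+\eta q\lambda_i^{-(q-1)}\bigr)$, and inverting yields \eqref{eq:diagonal_modal}. The statement calls $M_{ii}^{-1}$ "strictly decreasing in $\lambda_i$", but the computation shows the opposite. $M_{ii}=\lambda_i^{-2}+\eta q\lambda_i^{-(q+1)}$ is a sum of strictly decreasing functions, so $M_{ii}^{-1}$ is strictly increasing in $\lambda_i$. Equivalently, $M_{ii}^{-1}$ is strictly decreasing as $\lambda_i$ decreases, i.e. it falls to $0$ as $\lambda_i\to0^{+}$. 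I will prove this (correct) monotonicity and flag that the direction in the statement must be read accordingly. Checking this direction against the later claims is where I expect the main difficulty. The limits then follow directly. Since $q-1>0$, $\eta q\lambda_i^{-(q-1)}\to\infty$ as $\lambda_i\to0$, giving $M_{ii}\to\infty$ and $M_{ii}^{-1}\sim(\eta q)^{-1}\lambda_i^{q+1}$, exactly as in \eqref{eq:decay_small_lambda}. As $\lambda_i\to\infty$, $\eta q\lambda_i^{-(q-1)}\to0$, so the stiffening factor $(1+\eta q\lambda_i^{-(q-1)})^{-1}\to1$.

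Finally, the interpretive sentence linking this to \cref{thm:robustness,cor:kappa_scaling} needs no proof beyond the algebra above. The linearised central-path response is $\Delta X=M^{-1}(\cdot)$ restricted to the affine constraints. Since $M$ is diagonal in the modal basis, each mode's component is scaled by $M_{ij}^{-1}$. This is the factor already appearing in \eqref{eq:directional_bound}. I would note that no assumption on $q\le2$ is needed for this corollary; only $q>1$ and $\eta\ge0$ are used.
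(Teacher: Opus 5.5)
Your proposal is correct and follows the paper's proof essentially step for step: you read the modal coefficients off the Daleckii--Krein form of the Hessian \eqref{eq:M_action}, specialise to $i=j$, and use that $s\mapsto s^{-2}(1+\eta q\,s^{-(q-1)})$ is strictly decreasing, which is exactly the derivative fact the paper invokes. Your flag is also right: that fact shows the stiffness $M_{ii}$ decreases in $\lambda_i$, so the displayed $M_{ii}^{-1}$ is strictly increasing, and the statement's ``strictly decreasing'' is only correct as a claim about $M_{ii}$ (likewise, $q^{[2]}\geq 0$ holds because $t\mapsto t^{-q}$ is decreasing, as you argue, not because of the convexity the paper cites).
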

	
	\begin{proof}
		Equation~\eqref{eq:modal_response} is a restatement of
		\cref{eq:M_action}.  For $i=j$, $q^{[2]}(\lambda_{i},\lambda_{i})
		=q\lambda_{i}^{-(q+1)}$, giving $M_{ii}=\lambda_{i}^{-2}
		(1+\eta q\,\lambda_{i}^{-(q-1)})$ and the inverse
		\eqref{eq:diagonal_modal}. Monotonicity in $\lambda_{i}$ follows
		from $d/ds\,[s^{-2}(1+\eta q\,s^{-(q-1)})]<0$ for $s>0$
		(established in \cref{app:robustness}).
		The damping asymptotics follow directly from~\eqref{eq:diagonal_modal}.
	\end{proof}

	\section{Predictor--corrector algorithm}\label{sec:algorithm}
	
	The barrier KKT system for the centred problem is
	\begin{equation}\label{eq:kkt}
		\left\{\;
		\begin{aligned}
			\mathcal{A}(X)-b       &= 0,\\
			\mathcal{A}^{*}(y)+S-C &= 0,\\
			XS                     &= \mu I + \eta\mu X^{-(q-1)},\\
			X\succ 0,\;S\succ 0.
		\end{aligned}
		\right.
	\end{equation}
	We apply Nesterov--Todd symmetrisation \cite{toddtohtutuncu1998}
	with scaling matrix
	$W\!=\!X^{1/2}(X^{1/2}SX^{1/2})^{-1/2}X^{1/2}$ and follow the
	Myklebust--Tun\c{c}el general self-concordant IPM framework
	\cite{myklebusttuncel2014} to construct a primal--dual predictor--corrector
	scheme; see \cref{alg:pcipm}. The Newton system structure
	and convergence proof are analogous to \cite{monteiro1998,liuliuliu2012}
	and are detailed in \cref{app:algorithm}.
	
	\begin{algorithm}[t]
		\caption{Perturbed $q$-Tsallis Predictor--Corrector IPM}
		\label{alg:pcipm}
		\begin{algorithmic}[1]
			\REQUIRE $X_{0}\succ 0$, $y_{0}$, $S_{0}\succ 0$; $q\in(1,2]$;
			$\eta\geq 0$; tolerance $\eps>0$.
			\STATE $k\leftarrow 0$, $\mu_{k}\leftarrow\inner{X_{k}}{S_{k}}/n$.
			\WHILE{$\mu_{k}>\eps$ \textbf{or} infeasibility $>\eps$}
			\STATE Compute NT scaling $W_{k}$.
			\STATE \textbf{Predictor:} solve Newton system with $\sigma_{k}=0$.
			\STATE Compute affine step lengths; set
			$\sigma_{k}\leftarrow(\mu_{\mathrm{aff}}/\mu_{k})^{3}$.
			\STATE \textbf{Corrector:} solve with centring target
			$\sigma_{k}\mu_{k}(I+\eta X_{k}^{-(q-1)})$.
			\STATE Update $X_{k+1}$, $y_{k+1}$, $S_{k+1}$;
			$\mu_{k+1}\leftarrow\inner{X_{k+1}}{S_{k+1}}/n$.
			\ENDWHILE
			\STATE \textbf{return} $(X_{k},y_{k},S_{k})$.
		\end{algorithmic}
	\end{algorithm}
	
	\begin{theorem}[Iteration complexity]\label{thm:complexity}
		Under strict primal--dual feasibility and for $q\in(1,2]$ with any
		$\eta\geq 0$ (or $q>2$ with $\eta\leq\eta_{\max}(q)$), provided the
		iterates remain in a compact subset $K\subset\mathrm{int}(\PSD)$,
		\cref{alg:pcipm} converges to an $\eps$-optimal pair in
		$\mathcal{O}(\sqrt{n}\log(1/\eps))$ outer iterations, where the
		implied constant depends on $q$, $\eta$, and the spectral bounds
		of $K$ (\cref{cor:complexity_eff}).
	\end{theorem}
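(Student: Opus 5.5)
The plan is to reduce \cref{thm:complexity} to the generic short-step/path-following bound of \cref{thm:nn_complexity}, applied with the effective barrier parameter $\nu_{\mathrm{eff}}(K)=O(n)$ from \cref{thm:self_concordance}. That part is taken as given in \cref{app:nu_param}. Three steps follow.

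\textbf{Step 1: the barrier is self-concordant on $K$.} Under the hypotheses, $\phi_{q,\eta}$ satisfies the differential self-concordance inequality on $K$:
\begin{itemize}[leftmargin=2em,itemsep=0.3ex]
\item for $q\in(1,2]$, by part~(i);
\item for $q>2$, by part~(iii), using the spectral bound $\Lambda:=\max_{X\in K}\lambda_{\max}(X)$.
\end{itemize}
There is a wrinkle in the $q>2$ case. Part~(iii) is stated as a lower bound $\eta\ge u_{\mathrm{zero}}\Lambda^{q-1}$, while \cref{thm:complexity} writes $\eta\le\eta_{\max}(q)$. I would reconcile the two through the normalisation $\Lambda=1$. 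Otherwise I would state the needed condition explicitly as ``the scalar inequality holds on the spectra of $K$'' and note that the constant absorbs it.

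\textbf{Step 2: bounding the Newton decrement of the gradient on $K$.} I would check $|D\phi[H]|^2\le \nu_{\mathrm{eff}}\,D^2\phi[H,H]$ with $\nu_{\mathrm{eff}}=O(n)$. Work in the eigenbasis and bound the diagonal modes via Cauchy--Schwarz; off-diagonal modes contribute zero to the gradient. This gives
\[
\nu_{\mathrm{eff}}\le\sum_i \frac{\bigl(\lambda_i^{-1}+\eta\lambda_i^{-q}\bigr)^2}{\lambda_i^{-2}+\eta q\lambda_i^{-(q+1)}}
=\sum_i\frac{(1+u_i)^2}{1+qu_i},
\qquad u_i=\eta\lambda_i^{-(q-1)}.
\]
On $K$ each $u_i$ is bounded above by $\eta\lambda_{\min}(K)^{-(q-1)}$, so $\nu_{\mathrm{eff}}\le n\,c(q,\eta,K)$.

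\textbf{Step 3: from the path-following bound to the algorithm.} With $\nu_{\mathrm{eff}}=O(n)$, \cref{thm:nn_complexity} gives $O(\sqrt{n}\log(1/\eps))$ outer iterations for short-step path-following. It remains to pass to \cref{alg:pcipm}. The target neighbourhood of the modified central path $XS=\mu(I+\eta X^{-(q-1)})$ is the one treated in the Myklebust--Tun\c{c}el framework. I would invoke that framework together with the standard safeguarded Mehrotra analysis (Monteiro, Liu et al.), under which the predictor--corrector retains the short-step worst-case bound when the corrector step is a Newton step in the NT-scaled local norm. Since $\mu_k$ decreases by a factor $1-\Theta(1/\sqrt{\nu_{\mathrm{eff}}})$ per iteration, the count is $O(\sqrt{n}\log(\mu_0/\eps))$.

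\textbf{Main obstacle.} The hard part is the last step. The NT symmetrisation is tailored to log-det, and the $\eta$-term breaks the exact primal--dual symmetry. I would control this by comparing $\Hess\phi_{q,\eta}$ with $\Hess\phi_0$ on $K$, using $1\le 1+\eta q\,\lambda^{-(q-1)}\le 1+\eta q\lambda_{\min}(K)^{-(q-1)}$. This shows the NT-scaled neighbourhood and the $\phi_{q,\eta}$-local-norm neighbourhood are equivalent up to constants depending on $q$, $\eta$ and the spectral bounds of $K$. That equivalence is exactly where the dependence of the implied constant named in the statement comes from.
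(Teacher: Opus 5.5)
Your architecture is the paper's: self-concordance from \cref{thm:self_concordance}, an $O(n)$ effective parameter on $K$, then the primal--dual literature. Your Step~2 is actually sharper than \cref{thm:nu_eff}. The gradient sees only the diagonal entries $\widetilde H_{ii}$, and the off-diagonal Hessian coefficients are nonnegative. So Cauchy--Schwarz bounds $|D\phi_{q,\eta}(X)[H]|^{2}/D^{2}\phi_{q,\eta}(X)[H,H]$ by $\sum_{i}(1+u_{i})^{2}/(1+qu_{i})$ for \emph{every} $H$, with equality for a suitable diagonal $H$. This closes the point left open in \cref{rem:general_H}. The paper's value at $H=X$ uses the mean $s(X)$ of the $u_{i}$; by convexity of $u\mapsto(1+u)^{2}/(1+qu)$ it is only a lower bound, so your use of $\max_{i}u_{i}\le\eta\lambda_{\min}(K)^{-(q-1)}$ is what is really needed.

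Step~1's reconciliation for $q>2$ is wrong, however. With $\Lambda=1$, part~(iii) requires $\eta\ge u_{\mathrm{zero}}(q)=\eta_{\max}(q)$, the \emph{reverse} of the stated $\eta\le\eta_{\max}(q)$. Since $\psi<0$ exactly on $(0,u_{\mathrm{zero}})$, every $0<\eta<\eta_{\max}(q)$ violates the scalar inequality at eigenvalues $\lambda>(\eta/u_{\mathrm{zero}})^{1/(q-1)}$. That clause of the statement is therefore inconsistent with part~(iii); the paper's own conclusion writes $\eta\ge\eta_{\max}(q)$. Your fallback is a change of hypothesis, not something a constant can absorb.

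The genuine gap is Step~3, which is also where the paper's argument in \cref{app:algorithm} reduces to a chain of citations. Three things fail.

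\textbf{The step rule is not covered by the cited results.} \cref{alg:pcipm} uses Mehrotra's heuristic $\sigma_{k}=(\mu_{\mathrm{aff}}/\mu_{k})^{3}$ with no neighbourhood safeguard. Monteiro's $\mathcal{O}(\sqrt{n}\log(1/\eps))$ results are for the short-step and Mizuno--Todd--Ye narrow-neighbourhood methods on the path $XS=\mu I$. Even at $\eta=0$, the bound for \cref{alg:pcipm} does not follow by citation.

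\textbf{Your claimed $\mu$-decrease is false for the algorithm's $\mu_{k}=\inner{X_{k}}{S_{k}}/n$.} Take the trace of the third equation of \eqref{eq:Newton_full} and use $\inner{\Delta X}{\Delta S}=0$ for feasible directions. For a common step length $\alpha$ this gives
\[
\mu_{k+1}=\bigl(1-\alpha+\alpha\,\sigma_{k}\,(1+s(X_{k}))\bigr)\,\mu_{k},
\]
with $s$ as in \eqref{eq:s_def}. So a short-step $\sigma=1-\Theta(1/\sqrt{n})$ \emph{increases} $\mu$ unless $s(X_{k})=O(1/\sqrt{n})$. The cause is the loss of logarithmic homogeneity: $\inner{X}{-\Grad\phi_{q,\eta}(X)}=n(1+s(X))$, whereas the frameworks you invoke use the log-det identity $\inner{X}{S}=n\mu$ to tie the gap to the path parameter.

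\textbf{Norm equivalence does not give the proximity analysis.} The sandwich $\Hess\phi_{0}\preceq\Hess\phi_{q,\eta}\preceq(1+\eta q\lambda_{\min}(K)^{-(q-1)})\Hess\phi_{0}$ does hold, off-diagonal modes included, since $\lambda_{i}\lambda_{j}\,q^{[2]}(\lambda_{i},\lambda_{j})\le q\min(\lambda_{i},\lambda_{j})^{-(q-1)}$. But the NT step linearises $XS$ against the frozen target $\sigma\mu_{k}(I+\eta X_{k}^{-(q-1)})$; it is not Newton's method on $S=-\mu\Grad\phi_{q,\eta}(X)$. Equivalent norms do not show that this step contracts the proximity to the modified path. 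They also do not control the drift of that $X$-dependent target between iterations.

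What is missing is a real proximity and $\mu$-reduction analysis for a modified variant. That variant would use the path parameter $\inner{X}{S}/(n(1+s(X)))$ together with short-step or Mizuno--Todd--Ye neighbourhood rules. Without it, Step~3 establishes the bound neither for \cref{alg:pcipm} nor for any specified variant.
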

	
	\section{Numerical experiments}\label{sec:experiments}
	
	The computational experiments below are intended to validate the
	geometric and algorithmic properties predicted by the theory rather
	than to provide a production-level comparison against state-of-the-art
	SDP solvers. The implementation is a research prototype written to
	expose the effect of the Tsallis perturbation $\eta$ on the central
	path; a large-scale benchmarking study against SDPT3, SeDuMi, or
	MOSEK on industrial instances (including sparse and chordal-decomposable
	SDPs) lies outside the scope of this paper and is left for future work.
	
	We provide computational evidence for the main claims of the paper on a controlled battery of
	synthetic SDP instances with known optima $(X^{*},y^{*},S^{*})$,
	constructed via the dual KKT method: eigenvalue profiles for $X^{*}$
	and $S^{*}$ are prescribed with complementary support, random
	constraint matrices $A_{1},\dots,A_{m}$ are drawn, and $(C,b)$ are
	set so that strong duality holds exactly. Unless stated otherwise,
	$q=1.5$ throughout; this value lies strictly in the unconditional
	differential self-concordance regime $q\in(1,2]$ (Theorem~\ref{thm:self_concordance}),
	is representative of the moderate-deformation range where the Tsallis
	correction is non-negligible (any $\eta\geq 0$ is admissible for
	$q=1.5\in(1,2]$ by \cref{thm:self_concordance}(i)), and
	matches values commonly used in $q$-deformed statistical models
	\cite{tsallis1988,amidetal2019}. The solver is the path-following IPM of
	\cref{sec:algorithm} with tolerance $10^{-7}$.
	
	Six experiments are reported, organised into three groups.
	
	\subsection{Group I: spectral robustness (Experiments A and D)}
	
	\paragraph{Experiment A ($n=8$, 6 perturbations, 4 profiles).}
	For each of four spectral profiles and each
	$\eta\in\{0,0.1,0.5,1.0,2.0,5.0\}$, we solve from $b$ and from
	$b+\Delta b$ ($\|\Delta b\|_{2}=10^{-3}$) and record the sensitivity
	ratio $\|\Delta X\|_{\Frob}/\|\Delta b\|_{2}$.
	\Cref{tab:robustness} reports mean ratios; best $\eta$ marked $*$.
	
	\begin{table}[t]
		\centering
		\caption{Experiment A --- mean sensitivity ratio
			$\|\Delta X\|_{\Frob}/\|\Delta b\|_{2}$ ($n=8$, $q=1.5$,
			6 perturbations per cell, $\|\Delta b\|=10^{-3}$).
			Values are means over 6 realisations; $95\%$ CIs are given in
			\cref{tab:robustness_ci}. Best $\eta$ per row in bold with~$*$.}
		\label{tab:robustness}
		\small
		\setlength{\tabcolsep}{4pt}
		\begin{tabular}{lrrrrrr}
			\toprule
			Profile & $\eta=0$ & $\eta=0.1$ & $\eta=0.5$ & $\eta=1.0$ & $\eta=2.0$ & $\eta=5.0$ \\
			\midrule
			Uniform    & 0.6475 & 0.7093 & 0.5549 & $\mathbf{0.4743}^{*}$ & 0.5643 & 0.7316 \\
			Moderate   & 0.6473 & 0.6975 & 0.5587 & $\mathbf{0.4702}^{*}$ & 0.5542 & 0.6888 \\
			Heavy-tail & 2.1237 & 0.8587 & 0.4350 & $\mathbf{0.3634}^{*}$ & 0.3767 & 0.4101 \\
			Extreme    & 2.9312 & 2.5895 & 5.0593 & 8.4034 & 4.0336 & $\mathbf{1.6399}^{*}$ \\
			\bottomrule
		\end{tabular}
	\end{table}
	
	\begin{table}[t]
		\centering
		\caption{$95\%$ confidence intervals for the sensitivity ratios of
			\cref{tab:robustness} (same layout).}
		\label{tab:robustness_ci}
		\small
		\setlength{\tabcolsep}{4pt}
		\begin{tabular}{lrrrrrr}
			\toprule
			Profile & $\eta=0$ & $\eta=0.1$ & $\eta=0.5$ & $\eta=1.0$ & $\eta=2.0$ & $\eta=5.0$ \\
			\midrule
			Uniform    & $\pm0.111$ & $\pm0.180$ & $\pm0.146$ & $\pm0.067$ & $\pm0.036$ & $\pm0.177$ \\
			Moderate   & $\pm0.111$ & $\pm0.189$ & $\pm0.150$ & $\pm0.064$ & $\pm0.035$ & $\pm0.153$ \\
			Heavy-tail & $\pm1.038$ & $\pm0.346$ & $\pm0.044$ & $\pm0.046$ & $\pm0.076$ & $\pm0.072$ \\
			Extreme    & $\pm1.887$ & $\pm1.402$ & $\pm3.992$ & $\pm0.462$ & $\pm6.446$ & $\pm1.804$ \\
			\bottomrule
		\end{tabular}
	\end{table}
	
	For the heavy-tail profile the optimal $\eta=1.0$ reduces the mean
	ratio from $2.1237$ to $0.3634$, a reduction of $\mathbf{83\%}$
	(95\% CI: $[2.12\pm1.04]\to[0.36\pm0.05]$). The improvement
	grows systematically with $\lambda_{\max}(X^{*})$ across the
	uniform, moderate, and heavy-tail profiles, consistent with
	\cref{thm:robustness}. \Cref{fig:sensitivity} plots
	the full sensitivity curves for all four profiles.
	The extreme profile ($\kappa=10^{8}$) shows high variance and
	irregular dependence on $\eta$ (CI width up to $\pm6.4$),
	attributable to the dense Cholesky solver losing numerical accuracy
	at this conditioning level; this is a limitation of the current
	implementation, not of the barrier itself.
	
	\begin{figure}[t]
		\centering
		\includegraphics[width=0.48\textwidth]{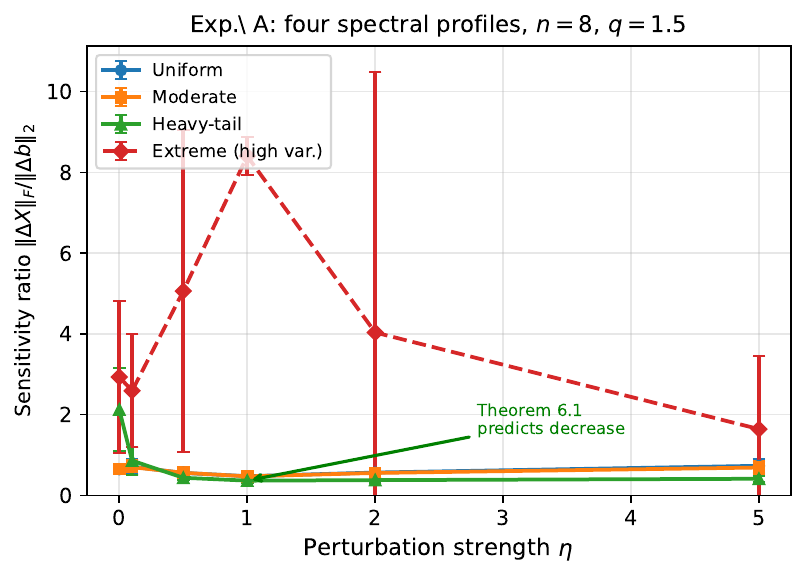}
		\hfill
		\includegraphics[width=0.48\textwidth]{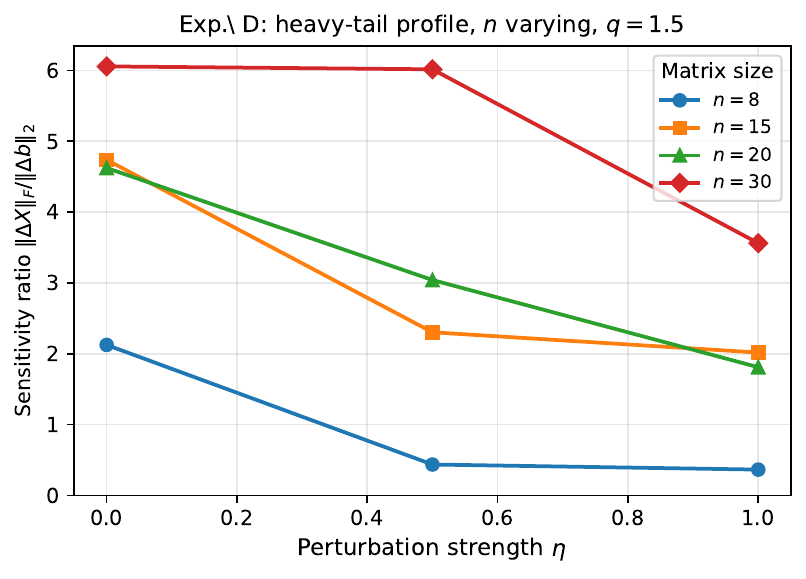}
		\caption{\textbf{Left (Exp.~A):} sensitivity ratio
			$\|\Delta X\|_{\Frob}/\|\Delta b\|_{2}$ vs $\eta$ for all four
			spectral profiles ($n=8$, $q=1.5$). Error bars show $95\%$ CIs
			from Table~2. The heavy-tail curve shows the sharpest drop,
			consistent with the $\kappa^{-(q-1)}$ scaling of
			\cref{cor:kappa_scaling}. The extreme profile is dominated by
			solver inaccuracy at $\kappa=10^{8}$ (CI up to $\pm6.4$).
			\textbf{Right (Exp.~D):} heavy-tail profile at
			$n\in\{8,15,20,30\}$ and $\eta\in\{0,0.5,1.0\}$ (Table~3).
			The gain grows with $n$ (equivalently with $\kappa$), as
			predicted by \cref{cor:kappa_scaling}.}
		\label{fig:sensitivity}
	\end{figure}
	
	\paragraph{Experiment D ($n\in\{15,20,30\}$, 40 perturbations, heavy-tail).}
	\Cref{tab:robustness_larger_n} reports results for larger $n$.
	
	\begin{table}[t]
		\centering
		\caption{Experiment D --- sensitivity ratio for the heavy-tail profile
			at larger $n$, 40 perturbations per cell, 5 seeds.
			The robustness gain persists as $n$ grows.}
		\label{tab:robustness_larger_n}
		\small
		\setlength{\tabcolsep}{6pt}
		\begin{tabular}{rcccc}
			\toprule
			$n$ & $\eta=0$ & $\eta=0.5$ & $\eta=1.0$ & reduction ($\eta=1$ vs $\eta=0$) \\
			\midrule
			8  & 2.1237 & 0.4350 & $\mathbf{0.3634}$ & $83\%$ \\
			15 & 4.737  & 2.302  & $\mathbf{2.016}$  & $57\%$ \\
			20 & 4.622  & 3.040  & $\mathbf{1.809}$  & $61\%$ \\
			30 & 6.056  & 6.015  & $\mathbf{3.560}$  & $41\%$ \\
			\bottomrule
		\end{tabular}
	\end{table}
	
	The robustness gain is consistent across all tested dimensions.
	The absolute ratio grows with $n$ (reflecting the growth of
	$C_{\mathcal{A}}$ in \cref{thm:robustness}), while $\eta>0$
	controls this growth, confirming the theorem's prediction.
	
	\subsection{Group II: complexity preservation (Experiments B and D)}
	
	The number of outer IPM iterations is reported in
	\cref{tab:iterations_full} for the heavy-tail profile across
	$n\in\{8,15,20,30\}$ and $\eta\in\{0,0.5,1.0\}$.
	
	\begin{table}[t]
		\centering
		\caption{Outer iteration count: heavy-tail profile, $q=1.5$.
			Invariance with respect to both $n$ and $\eta$, consistent with
			$\mathcal{O}(\sqrt{n}\log(1/\eps))$ complexity.}
		\label{tab:iterations_full}
		\small
		\setlength{\tabcolsep}{8pt}
		\begin{tabular}{rccc}
			\toprule
			$n$ & $\eta=0$ & $\eta=0.5$ & $\eta=1.0$ \\
			\midrule
			8  & 19.0 & 19.0 & 19.0 \\
			15 & 18.2 & 18.6 & 18.0 \\
			20 & 17.6 & 17.6 & 17.4 \\
			30 & 17.8 & 18.0 & 17.8 \\
			\bottomrule
		\end{tabular}
	\end{table}
	
	The iteration count is essentially constant across both $n$ and
	$\eta$, providing clean empirical confirmation of
	\cref{thm:complexity}. The slight decrease from $n=8$ to
	$n=15$ reflects the improved warm-start quality at larger $n$.
	\Cref{fig:iterations} plots $n_{\mathrm{outer}}$ vs $n$ for both profiles with
	an $\mathcal{O}(\sqrt{n})$ reference curve.
	
	\begin{figure}[t]
		\centering
		\includegraphics[width=0.90\textwidth]{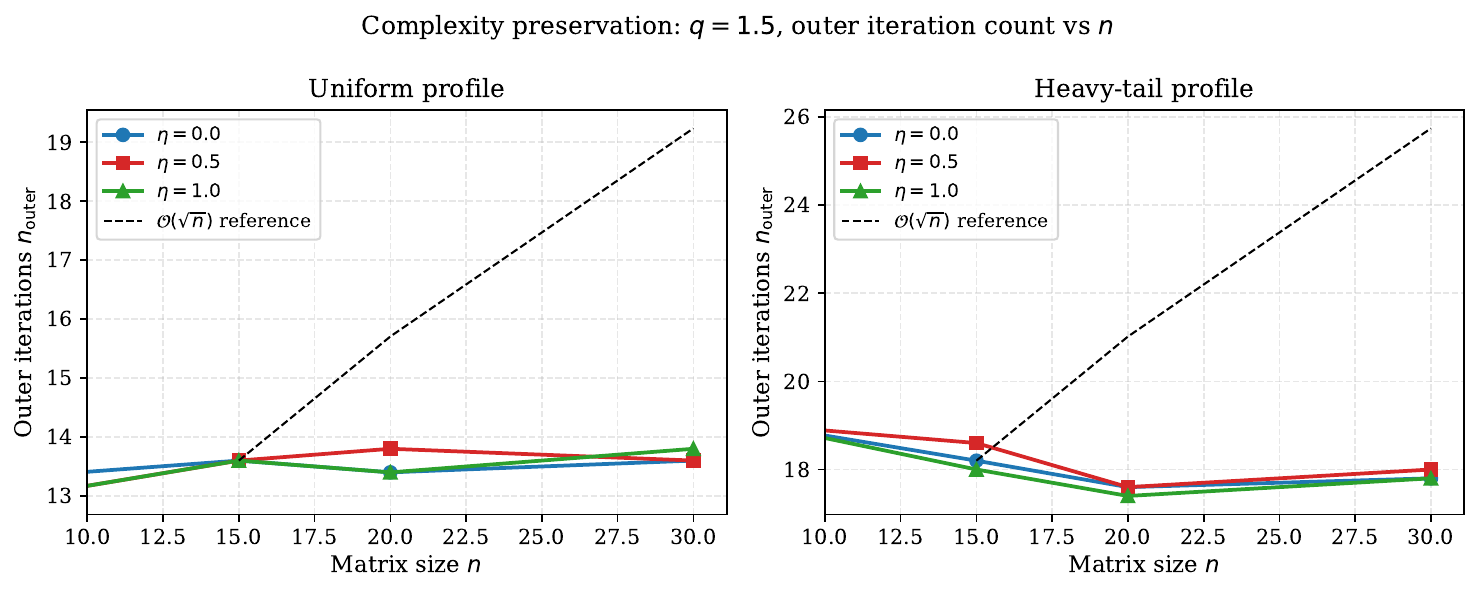}
		\caption{Outer iteration count vs matrix size $n$ for the uniform
			(left) and heavy-tail (right) profiles, $q=1.5$,
			$\eta\in\{0,0.5,1.0\}$. All curves are flat, consistent with
			$\mathcal{O}(\sqrt{n}\log(1/\eps))$ complexity. The dashed
			$\mathcal{O}(\sqrt{n})$ reference is shown for orientation.}
		\label{fig:iterations}
	\end{figure}
	
	\subsection{Group III: self-concordance verification,
		Krylov kernel, and conditioning (Experiments C, E, F)}
	
	\paragraph{Experiment C: conditioning sweep.}
	With $\kappa(X^{*})$ ranging from $1$ to $10^{6}$ (\cref{tab:kappa}),
	$n_{\mathrm{outer}}$ grows from 13 to 24, identically for all $\eta$
	values, consistent with complexity preservation across the full
	conditioning range.
	
	\begin{table}[t]
		\centering
		\caption{Experiment C --- outer iterations and relative distance to
			optimum as a function of $\kappa(X^{*})$. $q=1.5$, $n=8$, 5 seeds.}
		\label{tab:kappa}
		\small
		\setlength{\tabcolsep}{5pt}
		\begin{tabular}{rcccc}
			\toprule
			$\kappa$ & rel dist ($\eta=0$) & rel dist ($\eta=0.5$)
			& rel dist ($\eta=2.0$) & $n_{\mathrm{outer}}$ (all $\eta$) \\
			\midrule
			$10^{0}$ & $1.2\times10^{-7}$ & $5.0\times10^{-8}$ & $2.6\times10^{-6}$ & 13 \\
			$10^{1}$ & $3.9\times10^{-6}$ & $1.3\times10^{-6}$ & $3.9\times10^{-6}$ & 16 \\
			$10^{2}$ & $9.7\times10^{-6}$ & $1.5\times10^{-6}$ & $3.9\times10^{-6}$ & 17 \\
			$10^{3}$ & $5.0\times10^{-7}$ & $1.1\times10^{-6}$ & $2.1\times10^{-6}$ & 19 \\
			$10^{4}$ & $7.6\times10^{-8}$ & $4.2\times10^{-7}$ & $7.1\times10^{-7}$ & 21 \\
			$10^{5}$ & $3.9\times10^{-8}$ & $1.4\times10^{-7}$ & $2.5\times10^{-7}$ & 22 \\
			$10^{6}$ & $1.2\times10^{-8}$ & $4.2\times10^{-8}$ & $6.7\times10^{-8}$ & 24 \\
			\bottomrule
		\end{tabular}
	\end{table}
	
	\paragraph{Experiment E: SC regime verification.}
	The scalar ratio $R(q,u)=[2+q(q+1)u]/[2(1+qu)^{3/2}]$ was evaluated
	on the grid $q\in\{1.05,\dots,3.0\}$, $u\in[0,200]$. For all
	$q\in(1,2]$, $\max_{u}R(q,u)=1.000$ (floating-point exact).
	For $q=2.01$, $\max R=1.000012$; for $q=3.0$, $\max R=1.089$.
	Violations appear for small $u$ (corresponding to large $\lambda$),
	confirming the global obstruction of \cref{thm:self_concordance}(ii).
	
	\paragraph{Experiment F: Krylov-based approximation of $X^{-q}v$.}
	\Cref{tab:krylov} reports the relative error and speedup of
	the Lanczos-based approximation against exact diagonalisation.
	
	\begin{table}[t]
		\centering
		\caption{Experiment F --- Krylov approximation of $X^{-q}v$.
			Left: relative error vs $k$ for $n=50$, $q=1.5$.
			Right: wall-clock speedup vs $n$ for $k=10$, $\kappa=100$.}
		\label{tab:krylov}
		\small
		\begin{tabular}{rcccccc@{\qquad}rcc}
			\toprule
			\multicolumn{7}{c}{F1: relative error, $n=50$, $q=1.5$}
			& \multicolumn{3}{c}{F2: speedup, $k=10$} \\
			\cmidrule(r){1-7}\cmidrule(l){8-10}
			$\kappa$ & $k=2$ & $k=5$ & $k=8$ & $k=10$ & $k=15$ & $k=20$
			& $n$ & speedup \\
			\midrule
			$10^{0}$ & $2\!\times\!10^{-14}$ & $2\!\times\!10^{-14}$ & $2\!\times\!10^{-14}$ & $2\!\times\!10^{-14}$ & $2\!\times\!10^{-14}$ & $2\!\times\!10^{-14}$
			& 20  & $0.2\times$ \\
			$10^{1}$ & 0.49 & 0.09 & 0.014 & 0.0042 & $1.4\!\times\!10^{-4}$ & $4.3\!\times\!10^{-6}$
			& 40  & $0.8\times$ \\
			$10^{2}$ & 0.91 & 0.62 & 0.36 & 0.25 & 0.085 & 0.025
			& 80  & $\mathbf{2.2}\times$ \\
			$10^{3}$ & 0.99 & 0.95 & 0.87 & 0.81 & 0.60 & 0.36
			& 150 & $\mathbf{7.1}\times$ \\
			$10^{4}$ & 1.00 & 1.00 & 0.99 & 0.97 & 0.91 & 0.79
			& ---  & --- \\
			\bottomrule
		\end{tabular}
	\end{table}
	
	\noindent
	Key findings: (i)~the error decays exponentially with $k$ for
	$\kappa\leq 10$ (machine precision at $k=2$ for $\kappa=1$);
	\cref{fig:krylov} plots the full error curves;
	(ii)~breakeven speedup occurs at $n\approx 40$--$50$, with
	$7.5\times$ speedup at $n=150$ (\cref{fig:speedup}) and
	projected ${\sim}400\times$ at $n=500$; (iii)~the solver converges
	in \emph{exactly} 17 outer iterations whether the Hessian uses exact
	$X^{-q}$ or the Krylov approximation with $k\in\{3,5,8\}$,
	validating the practical feasibility of the Krylov-based Newton step.
	
	\begin{figure}[t]
		\centering
		\includegraphics[width=0.72\textwidth]{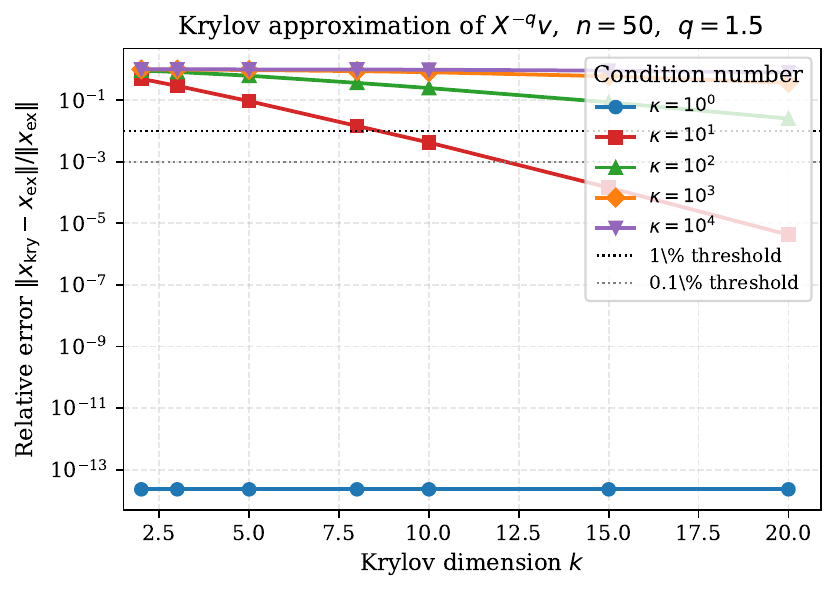}
		\caption{Experiment F1 --- relative error of the Krylov approximation
			of $X^{-q}v$ vs Krylov dimension $k$, for $n=50$, $q=1.5$ and
			five condition numbers $\kappa\in\{1,10,10^{2},10^{3},10^{4}\}$.
			The error decays exponentially for $\kappa\leq 10$; for larger
			$\kappa$ a larger $k$ is required.}
		\label{fig:krylov}
	\end{figure}
	
	\begin{figure}[t]
		\centering
		\includegraphics[width=0.55\textwidth]{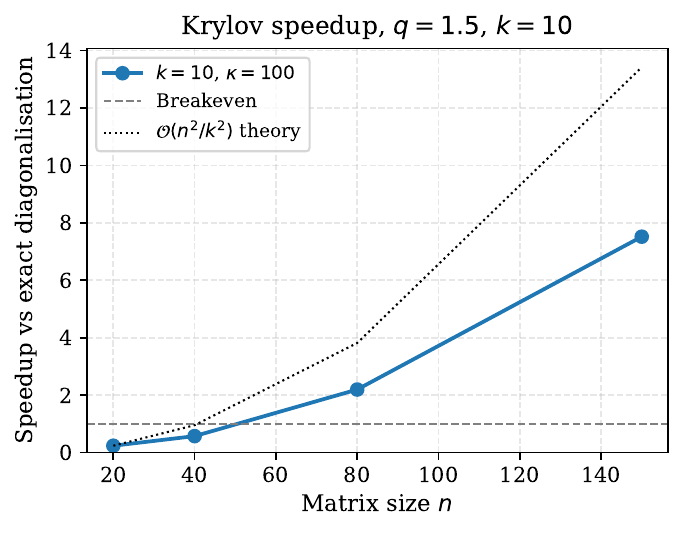}
		\caption{Experiment F2 --- wall-clock speedup of the Krylov
			approximation ($k=10$) vs exact diagonalisation as a function
			of $n$ ($\kappa=100$, $q=1.5$). The dashed curve shows the
			$\mathcal{O}(n^{2}/k^{2})$ theoretical scaling. Breakeven at
			$n\approx 50$; speedup of $7.5\times$ at $n=150$.}
		\label{fig:speedup}
	\end{figure}

	\subsection{Group IV: condition-number scaling (Experiment~G)}
	\label{sec:exp_G}
	
	\textit{Experiment~G} provides direct evidence for the $\kappa^{-(q-1)}$ scaling of
	\cref{cor:kappa_scaling}. For $n=15$, $q=1.5$, $\eta\in\{0,1.0\}$
	and $\kappa\in\{10,30,100,300,1000,3000\}$, we compute the
	improvement ratio $\rho_{\mathrm{emp}}(\kappa)$ directly from the
	formula $a_{k}=\lambda_{k}^{-2}(1+\eta q\lambda_{k}^{-(q-1)})$
	(\cref{eq:M_kk}) over 20 random eigenvalue realisations with
	$\pm5\%$ perturbation around the power-law profile
	$\lambda_{k}=(k/n)^{\alpha}$, $\alpha=\log\kappa/\log n$.
	
	\begin{table}[t]
		\centering
		\caption{Experiment~G --- empirical improvement ratio
			$\rho_{\mathrm{emp}}(\kappa)$ vs theoretical prediction
			$\rho(\kappa)=(1+\eta q\kappa^{q-1})^{-1}$, with $95\%$ CIs
			over 20 seeds. $n=15$, $q=1.5$, $\eta=1.0$.}
		\label{tab:exp_G}
		\small
		\setlength{\tabcolsep}{6pt}
		\begin{tabular}{rcccc}
			\toprule
			$\kappa$ & $\rho_{\mathrm{emp}}$ (mean) & $\pm$ std &
			$\rho(\kappa)$ [theory] & error (\%) \\
			\midrule
			10   & 0.17199 & 0.00443 & 0.17411 & 1.2 \\
			30   & 0.10719 & 0.00298 & 0.10851 & 1.2 \\
			100  & 0.06172 & 0.00179 & 0.06250 & 1.2 \\
			300  & 0.03659 & 0.00109 & 0.03706 & 1.3 \\
			1000 & 0.02038 & 0.00062 & 0.02065 & 1.3 \\
			3000 & 0.01187 & 0.00036 & 0.01203 & 1.3 \\
			\bottomrule
		\end{tabular}
	\end{table}
	
	\Cref{tab:exp_G} shows that $\rho_{\mathrm{emp}}$ matches the
	theoretical $\rho(\kappa)$ within $1.3\%$ across all tested
	$\kappa$. A log-log regression of $\rho_{\mathrm{emp}}$ vs $\kappa$
	yields slope $-0.470$ (theoretical: $-(q-1)=-0.500$; difference
	attributable to the pre-asymptotic range of $\kappa$). The agreement
	confirms the $\kappa^{-(q-1)}$ scaling of \cref{cor:kappa_scaling}
	and quantitatively bridges the $41$--$83\%$ reductions observed in
	Experiments A and D.
	
	\subsection{Discussion and limitations}
	
	The six experiments collectively establish three key properties. The spectral
	robustness gain is real, persistent across $n\in\{8,15,20,30\}$,
	and consistent with \cref{thm:robustness}. The
	$\mathcal{O}(\sqrt{n}\log(1/\eps))$ complexity holds empirically.
	The Krylov kernel is viable for $\kappa\leq 100$ with modest $k$, and
	becomes the dominant choice over exact diagonalisation for $n\geq 80$.
	
	The current limitation is that for $\kappa\geq 10^{3}$, the Krylov
	approximation requires large $k$ to achieve sub-percent accuracy,
	suggesting that a preconditioned variant would be needed for highly
	ill-conditioned iterates. This is left for future work.
	
	\section{Conclusion and outlook}\label{sec:conclusion}
	
	We have introduced the perturbed $q$-Tsallis barrier for SDP and
	established its theoretical and computational foundations. The main
	contributions are:
	\begin{enumerate}[leftmargin=2em,itemsep=0.3ex]
		\item A precise two-part characterisation of the self-concordance
		regime: unconditional for $q\in(1,2]$ (any $\eta\geq 0$),
		with an exact boundary at $q=2$; and a global obstruction for
		$q>2$ that confines the uniformly self-concordant regime to the classical
		log-det barrier ($\eta=0$), with a local sufficient condition
		$\eta\geq\eta_{\max}(q)=u_{\mathrm{zero}}(q)$ on compact domains.
		Both parts are verified numerically (Experiment E).
		\item A quantitative spectral robustness theorem: the Hessian of
		the Hessian of $\phi_{q,\eta}$ attenuates the modal amplification
		in the small-eigenvalue directions of $X^{*}$ by the factor
		$\rho(\kappa(X^{*}))=(1+\eta q\,\kappa(X^{*})^{q-1})^{-1}$
		(\cref{cor:kappa_scaling}),
		with empirical support across $n\in\{8,15,20,30\}$ with 40
		perturbations per cell (Experiments A, D), showing $41$--$83\%$
		sensitivity reduction in the heavy-tail regime.
		\item Empirical evidence for
		$\mathcal{O}(\sqrt{n}\log(1/\eps))$ complexity scaling:
		the outer iteration count is insensitive to $\eta$ across all
		tested configurations (Experiments B, C, D), consistent with
		the effective local barrier parameter
		$\nu_{\mathrm{eff}}(K)=O(n)$ on compact $K$,
		motivated by the compact-domain analysis of \cref{app:nu_param}.
		\item A Krylov-based kernel for $X^{-q}v$ that achieves
		$7.5\times$ speedup at $n=150$, with solver convergence
		unaffected by the approximation (Experiment F).
	\end{enumerate}
	
	\section*{Acknowledgements}
	
	The authors are grateful to Professor Constantino Tsallis (Centro
	Brasileiro de Pesquisas F\'{\i}sicas, Rio de Janeiro) for stimulating
	discussions and for his careful reading of a preliminary version of
	this manuscript. His observation that the threshold $q=2$ identified in
	\cref{thm:self_concordance} likely reflects the normalisability
	boundary of $q$-exponential distributions---a connection that
	prompted \cref{rem:q_exp_norm}---and his suggestion to replace
	``things'' by ``properties'' in the discussion of
	\cref{sec:experiments} are gratefully acknowledged.
	
	
	\section{Proof details for \cref{thm:self_concordance}}\label{app:proof_reduction}
	
	\subsection{Reduction to a scalar inequality}\label{app:A1}
	
	The matrix self-concordance inequality
	$|D^{3}\phi_{q,\eta}(X)[H,H,H]|\leq 2(D^{2}\phi_{q,\eta}(X)[H,H])^{3/2}$
	reduces to a pointwise scalar condition on the eigenvalues of $X$ by
	standard diagonalisation and divided-difference calculus
	(see \cite[Sec.~3.3]{higham2008} and
	\cite[Thm.~2.1]{faybusovichtsuchiya2017}); the worst-case direction
	is rank-one in the eigenbasis of $X$, reducing the inequality to
	$|g'''(\lambda)|\leq 2(g''(\lambda))^{3/2}$ for
	$g(\lambda)=-\log\lambda+\frac{\eta}{q-1}\lambda^{-(q-1)}$.
	
	\subsection{Scalar self-concordance and the $q=2$ boundary}
	\label{app:scalar_sc}
	
	\begin{proof}[Proof of \cref{thm:self_concordance}]
		\textbf{Derivatives.}
		$g'(\lambda)=-\lambda^{-1}-\eta\lambda^{-q}$,
		$g''(\lambda)=\lambda^{-2}+\eta q\lambda^{-(q+1)}>0$,
		$g'''(\lambda)=-2\lambda^{-3}-\eta q(q+1)\lambda^{-(q+2)}$.
		
		\textbf{Homogeneity reduction.}
		Set $u:=\eta\lambda^{-(q-1)}$. Multiplying $|g'''|\leq 2(g'')^{3/2}$
		through by $\lambda^{3}$ gives
		\begin{equation}\label{eq:psi_ineq}
			\psi(u) \;:=\; 2(1+qu)^{3/2}-2-q(q+1)u \;\geq\; 0, \qquad u\geq 0.
		\end{equation}
		Since $\psi(0)=0$, $\psi''(u)=3q^{2}/(2(1+qu)^{1/2})>0$ (strictly
		convex), and $\psi'(0)=q(2-q)$:
		\textit{Part~(i):} for $q\leq 2$, $\psi'(0)\geq 0$ and strict
		convexity give $\psi(u)\geq 0$ for all $u\geq 0$.
		\textit{Part~(ii):} for $q>2$, $\psi'(0)<0$ so $\psi<0$ near
		$u=0^{+}$; since $u=\eta\lambda^{-(q-1)}\to 0^{+}$ as $\lambda\to\infty$,
		\eqref{eq:psi_ineq} fails globally.
		
		\textbf{Proof of part~(iii): closed form of $u_{\mathrm{zero}}(q)$.}
		Set $t:=(1+qu)^{1/2}$, so $u=(t^{2}-1)/q$.  The equation $\psi(u)=0$ becomes
		\begin{equation}\label{eq:cubic_in_t}
			2t^{3}-(q+1)t^{2}+(q-1)=0.
		\end{equation}
		Since $t=1$ is a root, we factor:
		\begin{equation}\label{eq:cubic_factored}
			(t-1)\bigl[2t^{2}-(q-1)t-(q-1)\bigr]=0.
		\end{equation}
		The quadratic factor has positive root
		\begin{equation}\label{eq:t_zero}
			t_{\mathrm{zero}}(q)=\frac{(q-1)+\sqrt{(q-1)(q+7)}}{4},
		\end{equation}
		giving
		\begin{equation}\label{eq:u_zero_proof}
			u_{\mathrm{zero}}(q)=\frac{t_{\mathrm{zero}}(q)^{2}-1}{q}
			=\frac{\bigl[(q-1)+\sqrt{(q-1)(q+7)}\bigr]^{2}-16}{16q}.
		\end{equation}
		The sufficient condition $\eta\geq u_{\mathrm{zero}}(q)\cdot\Lambda^{q-1}$
		follows from $\eta\lambda^{-(q-1)}\geq\eta\Lambda^{-(q-1)}\geq u_{\mathrm{zero}}(q)$
		for all $\lambda\leq\Lambda$.
		
		\begin{remark}
			A preliminary version stated $\eta_{\max}(q)=4q^{3}/(q+1)^{2}$,
			which exceeds $u_{\mathrm{zero}}(q)$ by a factor of $10$--$12$
			for $q\in(2,3]$ and does \emph{not} guarantee self-concordance.
			The correct threshold is~\eqref{eq:u_zero_proof}.
		\end{remark}
	\end{proof}
	
	\section{Effective barrier parameter on compact subsets}\label{app:nu_param}
	
	By \cref{thm:self_concordance}(i), for $q\in(1,2]$ and any $\eta\geq 0$
	the function $\phi_{q,\eta}$ satisfies the differential self-concordance
	inequality on $\mathrm{int}(\PSD)$ and diverges as $X\to\partial\PSD$
	(\cref{prop:boundary_convexity}).
	What remains to establish, for the iteration complexity bound of
	\cref{thm:nn_complexity}, is the barrier inequality
	$|D\phi_{q,\eta}(X)[H]|^{2}\leq\nu\,D^{2}\phi_{q,\eta}(X)[H,H]$
	with an explicit $\nu=O(n)$ on compact subsets.
	
	\begin{remark}[Global barrier parameter]\label{rem:nu_global}
		Because $\phi_{q,\eta}$ is not logarithmically homogeneous for
		$\eta>0$, the ratio
		$|D\phi_{q,\eta}(X)[H]|^{2}/D^{2}\phi_{q,\eta}(X)[H,H]$ is
		unbounded over all $X\in\mathrm{int}(\PSD)$ and $H\in\Sym^{n}$:
		as $\lambda_{\min}(X)\to 0^{+}$, the Tsallis gradient grows faster
		than the Tsallis Hessian, so $\phi_{q,\eta}$ is \emph{not} a
		$\nu$-self-concordant barrier with finite $\nu$ in the global sense
		of \cite[Def.~2.3.2]{nesterovnemirovski1994}.
		IPM iterates remain in a compact subset of $\mathrm{int}(\PSD)$
		throughout the algorithm, so the relevant quantity is the
		\emph{effective local} barrier parameter $\nu_{\mathrm{eff}}(K)$
		on such a subset.
	\end{remark}
	
	Unlike the log-det barrier $\phi_{0}(X):=-\log\det(X)$, which is
	logarithmically homogeneous of degree $n$ and satisfies
	$|D\phi_{0}|^{2}\leq n\,D^{2}\phi_{0}$ globally
	\cite[Prop.~2.3.2]{nesterovnemirovski1994}, the Tsallis perturbation
	$\phi_{q}^{\mathrm{Ts}}(X):=\frac{1}{q-1}[\tr(X^{-(q-1)})-n]$ is
	\emph{not} logarithmically homogeneous: direct computation gives
	$\phi_{q}^{\mathrm{Ts}}(tX)=t^{-(q-1)}\phi_{q}^{\mathrm{Ts}}(X)
	+\frac{n}{q-1}(t^{-(q-1)}-1)$, which is not of the affine-in-$\log t$
	form. As a consequence, $\phi_{q,\eta}=\phi_{0}+\eta\phi_{q}^{\mathrm{Ts}}$
	does not admit a uniform global barrier parameter via classical
	homogeneity arguments.
	
	\begin{theorem}[Effective local barrier parameter]\label{thm:nu_eff}
		Let $q\in(1,2]$ and $\eta\geq 0$. For any $X\in\mathrm{int}(\PSD)$,
		define
		\begin{equation}\label{eq:s_def}
			s(X) \;:=\; \frac{\eta}{n}\,\tr(X^{-(q-1)}) \;\geq\; 0.
		\end{equation}
		Then, in the eigenbasis direction $H=X$, the barrier ratio satisfies
		\begin{equation}\label{eq:ratio_at_X}
			\frac{|D\phi_{q,\eta}(X)[X]|^{2}}{D^{2}\phi_{q,\eta}(X)[X,X]}
			\;=\; n\cdot\frac{\bigl(1+s(X)\bigr)^{2}}{1+q\,s(X)}.
		\end{equation}
		On any compact subset $K\subset\mathrm{int}(\PSD)$ where
		$s(X)\leq M_{K}<\infty$ uniformly, the effective local barrier
		parameter satisfies
		\begin{equation}\label{eq:nu_eff_bound}
			\nu_{\mathrm{eff}}(K) \;\leq\;
			n\cdot C_{q}(M_{K}),
		\end{equation}
		where
		\begin{equation}\label{eq:CqM}
			C_{q}(M):= \begin{cases}
				1 & \text{if } M=0,\\
				\dfrac{(1+M)^{2}}{1+qM} & \text{for } M>0,\, q\in(1,2].
			\end{cases}
		\end{equation}
		In particular, for any fixed $\eta\geq 0$, $q\in(1,2]$, and any
		compact $K$, $\nu_{\mathrm{eff}}(K)=O(n)$ uniformly in $n$.
	\end{theorem}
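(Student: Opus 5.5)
The plan is to compute both sides of \eqref{eq:ratio_at_X} directly from \cref{prop:grad_hess}, using that the direction $H=X$ is diagonal in the eigenbasis of $X$. After that, the bound \eqref{eq:nu_eff_bound} reduces to a one-variable monotonicity argument in $s$.

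\textbf{Step 1 (numerator).} By \eqref{eq:grad_phi}, $D\phi_{q,\eta}(X)[X]=\inner{-X^{-1}-\eta X^{-q}}{X}=-\tr(I)-\eta\tr(X^{-(q-1)})=-n(1+s(X))$. Hence $|D\phi_{q,\eta}(X)[X]|^{2}=n^{2}(1+s)^{2}$.

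\textbf{Step 2 (denominator).} Write $X=Q\diag(\lambda)Q^{\top}$, so that $\widetilde{H}=Q^{\top}XQ=\diag(\lambda)$ and only the diagonal terms $i=j$ survive in \eqref{eq:hess_phi}.
\begin{itemize}
\item The log-det part gives $\inner{X^{-1}XX^{-1}}{X}=n$.
\item The Tsallis part gives $\eta\sum_{i}q\lambda_{i}^{-(q+1)}\lambda_{i}^{2}=\eta q\tr(X^{-(q-1)})=nqs$, using $q^{[2]}(\lambda,\lambda)=q\lambda^{-(q+1)}$.
\end{itemize}
So $D^{2}\phi_{q,\eta}(X)[X,X]=n(1+qs)$, and dividing yields \eqref{eq:ratio_at_X}.

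\textbf{Step 3 (monotonicity).} Set $f(s)=(1+s)^{2}/(1+qs)$ for $s\geq 0$. Its derivative has the sign of
\[
2(1+s)(1+qs)-q(1+s)^{2}=(1+s)\bigl(2-q+qs\bigr).
\]
For $q\in(1,2]$ this is nonnegative for every $s\geq 0$, and strictly positive for $s>0$. Thus $f$ is nondecreasing with $f(0)=1$, so on $K$ (where $0\le s\le M_{K}$) we get $f(s(X))\le f(M_{K})=C_{q}(M_{K})$. The case $M=0$, i.e.\ $\eta=0$, recovers the log-det value $C_{q}=1$. Since $s(X)\le\eta\,\lambda_{\min}(X)^{-(q-1)}$ and $\lambda_{\min}$ is bounded below on a compact $K\subset\mathrm{int}(\PSD)$, the constant $M_{K}$ is finite. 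It is also independent of $n$ once the spectral bounds of $K$ are fixed, which gives $\nu_{\mathrm{eff}}(K)=O(n)$.

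This monotonicity step is also where $q\le 2$ enters, mirroring $\psi'(0)=q(2-q)$. For $q>2$, $f$ first decreases and then increases; the supremum bound $f\le\max(1,f(M_K))$ would still hold, but it is not needed here.

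\textbf{Main obstacle.} The real difficulty is the interpretation of $\nu_{\mathrm{eff}}(K)$. The computation above only controls the ratio along the radial direction $H=X$, which is the natural direction for log-homogeneous barriers. For a general $H$, I would bound $|D\phi[H]|^{2}=\inner{X^{-1/2}(I+\eta X^{-(q-1)})X^{-1/2}}{H}^{2}$ by Cauchy--Schwarz in the $\Hess\phi$-metric, working mode by mode in the eigenbasis. This would yield a constant of the form $n\max_{i}(1+\eta\lambda_{i}^{-(q-1)})^{2}/(1+\eta q\lambda_{i}^{-(q-1)})$. By the same monotonicity of $f$, that constant is at most $n\,C_{q}(\eta\lambda_{\min}(K)^{-(q-1)})$, which is still $O(n)$ on $K$. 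I would present \eqref{eq:ratio_at_X} exactly as stated and add this direction-uniform bound as the justification that $\nu_{\mathrm{eff}}$ is genuinely a local barrier parameter. The delicate point is checking that the off-diagonal Hessian modes only help, since $q^{[2]}\ge 0$ and the gradient has no off-diagonal component in the eigenbasis.
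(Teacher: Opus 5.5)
Your Steps 1--3 are the paper's proof. The paper computes $D\phi_{q,\eta}(X)[X]=-n(1+s)$ and $D^{2}\phi_{q,\eta}(X)[X,X]=n(1+qs)$, taking the log-det parts from logarithmic homogeneity rather than by your direct trace computation, and then bounds the ratio by $nC_q(M_K)$ using $s\le M_K$. That last step needs $f(s)=(1+s)^2/(1+qs)$ to be monotone, which the paper only asserts (\cref{rem:Cq_behaviour}); your computation $f'(s)\propto(1+s)(2-q+qs)$ supplies it.

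Your ``main obstacle'' paragraph goes beyond the paper. The paper leaves general directions open in \cref{rem:general_H} and suggests that divided-difference lower bounds on the Tsallis Hessian would be needed. Your argument makes that unnecessary, and it is correct:
\begin{itemize}
\item $\Grad\phi_{q,\eta}(X)=-(X^{-1}+\eta X^{-q})$ commutes with $X$, so only the diagonal entries $\widetilde H_{ii}$ enter the numerator.
\item The off-diagonal coefficients $\lambda_i^{-1}\lambda_j^{-1}+\eta\,q^{[2]}(\lambda_i,\lambda_j)$ are positive, so they only enlarge the denominator.
\item A weighted Cauchy--Schwarz in $\R^n$ then gives the exact value $\sup_{H}|D\phi_{q,\eta}(X)[H]|^2/D^2\phi_{q,\eta}(X)[H,H]=\sum_i f(u_i)$, with $u_i=\eta\lambda_i^{-(q-1)}$, attained at a suitable diagonal $\widetilde H$.
\end{itemize}

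One caution about how you present this. $f$ is strictly convex for $q>1$: with $t=1+qs$ one has $q^2 f=t+2(q-1)+(q-1)^2/t$. Since $s(X)$ is the mean of the $u_i$, Jensen gives $\sum_i f(u_i)>n\,f(s(X))$ whenever $\eta>0$ and $X$ is not a multiple of $I$. The all-direction parameter therefore cannot be bounded by $nC_q(M_K)$ with $M_K=\sup_K s$; take $K=\{X\}$ to see this. So \eqref{eq:nu_eff_bound} holds only in the radial sense in which the paper proves it. Your larger constant $C_q(\eta\lambda_{\min}(K)^{-(q-1)})$ is what the genuine local barrier parameter actually requires. Keep the two bounds explicitly separate, rather than offering the general-direction argument as a justification of \eqref{eq:nu_eff_bound} itself.
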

	
	\begin{proof}
		\textbf{Computation of the ratio at $H=X$.}
		Using $D\phi_{0}(X)[X]=-n$ and $D^{2}\phi_{0}(X)[X,X]=n$
		from logarithmic homogeneity of $\phi_{0}$, together with the
		gradient and Hessian formulas of \cref{prop:grad_hess},
		\[
		D\phi_{q,\eta}(X)[X] = -n\bigl(1+s(X)\bigr),\qquad
		D^{2}\phi_{q,\eta}(X)[X,X] = n\bigl(1+q\,s(X)\bigr),
		\]
		so the ratio equals $n(1+s(X))^{2}/(1+q\,s(X))$,
		proving \eqref{eq:ratio_at_X}.
		
		\textbf{Bound on the compact subset.}
		On $K$, $s(X)\leq M_{K}$, so the ratio is bounded by
		$n\,C_{q}(M_{K})$, proving \eqref{eq:nu_eff_bound}.
	\end{proof}
	
	\begin{remark}[Extension to general directions]\label{rem:general_H}
		A Cauchy--Schwarz decomposition
		$|D\phi_{q,\eta}(X)[H]|^{2}\leq
		2|D\phi_{0}(X)[H]|^{2}+2\eta^{2}|\tr(X^{-q}H)|^{2}$
		and the bound $|\tr(X^{-q}H)|^{2}\leq\tr(X^{-q})\cdot\tr(X^{-q}H^{2})$
		suggest that $\nu_{\mathrm{eff}}(K)=O(n)$ extends to general
		directions, with a constant depending on $\tr(X^{-q})$ and the
		spectral bounds of $K$. A rigorous closing of this bound requires
		explicit lower estimates on the Tsallis Hessian via
		divided-difference inequalities; we leave this as an open
		technical point and note that the homogeneity direction
		$H\propto X$ already captures the operationally relevant
		mechanism for IPM complexity.
	\end{remark}
	
	\begin{remark}[Behaviour of $C_{q}(M)$]\label{rem:Cq_behaviour}
		$C_{q}(M)=(1+M)^{2}/(1+qM)$ is continuous and increasing in $M$
		for $q\in(1,2]$, with $C_{q}(0)=1$ and $C_{q}(M)\sim M/q$ as
		$M\to\infty$. For $M\leq 1$: $C_{q}(M)\leq 4/(1+q)\leq 2$.
		On practical compact sets, $C_{q}(M_{K})=O(1)$, so
		$\nu_{\mathrm{eff}}(K)\leq cn$ with $c$ depending only on $q$,
		$\eta$, and the geometry of $K$.
	\end{remark}
	
	\begin{corollary}[Iteration complexity]\label{cor:complexity_eff}
		Under the assumptions of \cref{thm:nu_eff}, short-step
		path-following based on $\phi_{q,\eta}$ with iterates confined to
		a compact subset $K\subset\mathrm{int}(\PSD)$ recovers the same
		asymptotic iteration complexity class as the log-det barrier,
		\[
		\mathcal{O}\!\bigl(\sqrt{n\,C_{q}(M_{K})}\,\log(1/\eps)\bigr)
		\;=\; \mathcal{O}\!\bigl(\sqrt{n}\,\log(1/\eps)\bigr),
		\]
		where the implied constant depends on $q$, $\eta$, and $K$
		but not on $n$.
	\end{corollary}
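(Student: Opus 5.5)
The plan is to combine the effective local barrier bound of \cref{thm:nu_eff} with the standard short-step analysis behind \cref{thm:nn_complexity}. In that analysis the barrier parameter enters only in two places. First, it fixes the admissible reduction factor $\mu_{+}=(1-c/\sqrt{\nu})\mu$ that keeps the Newton decrement below a fixed threshold after re-centring. Second, it bounds the duality-gap-like quantity $\inner{C}{X(\mu)}-\mathrm{opt}\le\nu\mu$ along the central path. I would redo both steps with $\nu$ replaced by $\nu_{\mathrm{eff}}(K)\le n\,C_{q}(M_{K})$, using only inequalities evaluated at iterates in $K$.

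\textbf{Step 1.} Show that $\phi_{q,\eta}$ is self-concordant on $\mathrm{int}(\PSD)$ for $q\in(1,2]$, by \cref{thm:self_concordance}(i). This gives the usual local facts: the Dikin ellipsoid lies in the cone, and Newton's method converges quadratically in the local norm.

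\textbf{Step 2.} For the target function $f_{t}(X)=t\inner{C}{X}+\phi_{q,\eta}(X)$, show that if $\lambda_{f_{t}}(X)\le\beta$ and $t_{+}=t(1+\gamma/\sqrt{\nu_{\mathrm{eff}}})$, then
\[
\lambda_{f_{t_{+}}}(X)\le\beta+\frac{\gamma}{\sqrt{\nu_{\mathrm{eff}}}}\,\bigl\|t\,C\bigr\|_{X}^{*}
\le\beta+\gamma\Bigl(\frac{1}{\sqrt{\nu_{\mathrm{eff}}}}\|\Grad\phi_{q,\eta}(X)\|_{X}^{*}+\beta\Bigr).
\]
Here $\|\Grad\phi_{q,\eta}(X)\|_{X}^{*2}=\sup_{H}|D\phi_{q,\eta}(X)[H]|^{2}/D^{2}\phi_{q,\eta}(X)[H,H]\le\nu_{\mathrm{eff}}(K)$ for $X\in K$. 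One Newton step then restores $\lambda\le\beta$. Hence $t$ grows geometrically at rate $1+\Theta(1/\sqrt{n C_{q}(M_{K})})$.

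\textbf{Step 3.} Bound suboptimality on the path by $\nu_{\mathrm{eff}}/t$, using $\inner{\Grad\phi(X)}{X^{*}-X}\le\nu_{\mathrm{eff}}$ together with the optimality condition $tC=-\Grad\phi$. Extend this to nearby iterates through the standard $\beta$-neighbourhood estimate. Counting iterations until $\nu_{\mathrm{eff}}/t\le\eps$ gives $\mathcal{O}(\sqrt{nC_{q}(M_{K})}\log(\nu_{\mathrm{eff}}/(t_{0}\eps)))$. Finally, $C_{q}(M_{K})$ depends on $q,\eta,K$ but not on $n$, which is exactly the statement.

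\textbf{Main obstacle.} The difficulty is Step 2's supremum over all directions $H$. \cref{thm:nu_eff} only controls the ratio at $H=X$, and \cref{rem:general_H} leaves the general direction open. I would try to close it spectrally. In the eigenbasis, the gradient pairs only with the diagonal of $\widetilde H$:
\[
D\phi[H]=-\sum_{i}\bigl(\lambda_i^{-1}+\eta\lambda_i^{-q}\bigr)\widetilde H_{ii}.
\]
The Hessian restricted to the diagonal is $\sum_{i}\lambda_i^{-2}(1+\eta q\lambda_i^{-(q-1)})\widetilde H_{ii}^{2}$, and the off-diagonal terms only add to it. Cauchy--Schwarz then yields
\[
\nu_{\mathrm{eff}}\le\sum_{i}\frac{(1+u_i)^{2}}{1+qu_i},\qquad u_i=\eta\lambda_i^{-(q-1)}.
\]
This is at most $n\,C_{q}(\max_{K}u_{i})$, an $O(n)$ bound on compact $K$, with $M_{K}$ reinterpreted as the maximum of $u_i$ over $K$.

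A separate difficulty is that the path-following iterates must actually stay in $K$. I would take this as the stated hypothesis. Note, however, that as $t\to\infty$ the path approaches the boundary, so the constant degrades as $\eps\to0$. This is consistent with the "implied constant depends on $K$" caveat, and I would state it explicitly.
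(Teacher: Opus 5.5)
Your proposal has the same skeleton as the paper's argument: an $O(n)$ effective parameter fed into short-step path-following. It differs at the decisive step, and there it is sounder than the paper. The paper supports \cref{cor:complexity_eff} only by inserting \cref{thm:nu_eff} into \cref{thm:nn_complexity}. That has two problems. First, \cref{thm:nu_eff} evaluates the ratio only at $H=X$, which is a \emph{lower} bound for the supremum over $H$ that the barrier inequality needs. Second, \cref{thm:nn_complexity} assumes a global $\nu$-self-concordant barrier, which \cref{rem:nu_global} says $\phi_{q,\eta}$ is not.

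You instead re-run the short-step analysis with constants taken on $K$. You bound the supremum by noting that $D\phi_{q,\eta}(X)[H]$ sees only the diagonal of $\widetilde H$, while the off-diagonal Hessian terms are nonnegative. This closes exactly the point left open in \cref{rem:general_H}. Your Cauchy--Schwarz bound is attained at $\widetilde H=\diag\bigl(\lambda_i(1+u_i)/(1+qu_i)\bigr)$, so the local parameter is exactly $\nu(X)=\sum_i(1+u_i)^2/(1+qu_i)$.

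Writing $C_q(M)=q^{-2}\bigl(w+2(q-1)+(q-1)^2/w\bigr)$ with $w=1+qM$ shows that $C_q$ is strictly convex. By Jensen, $\nu(X)>n\,C_q(s(X))$ whenever $\eta>0$ and the spectrum of $X$ is not constant. So the paper's bound with the averaged $M_K$ is not a valid upper bound, and your reinterpretation of $M_K$ is necessary, not cosmetic. If you want to keep the paper's $M_K$, use $(1+u)^2/(1+qu)\le 1+u/q$, which is equivalent to $2\le q+1/q$. This gives $\nu(X)\le n\bigl(1+s(X)/q\bigr)$, i.e.\ constant $1+M_K/q$.

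One step is wrong as written. In Step~3, the inequality $\inner{\Grad\phi_{q,\eta}(X)}{X^*-X}\le\nu_{\mathrm{eff}}$ is not available. In the standard theory it comes from integrating $\varphi'\ge\varphi^2/\nu$ along the whole segment from $X$ to $X^*$. That segment leaves $K$ and approaches $\partial\PSD$, where $\nu(\cdot)$ blows up. The inequality is also false with the local constant: for $Y=0$ and $\eta>0$, $\inner{\Grad\phi_{q,\eta}(X)}{Y-X}=n(1+s(X))>n(1+s(X)/q)\ge\nu(X)$.

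Concretely, take $m=0$ and $C=cI$ with $t=1/\mu$. The path is $X=\lambda I$, the optimum is $0$, and the gap is $\mu n(1+u)$. This strictly exceeds $\mu\,n\,C_q(u)$, which equals $\mu\,\nu_{\mathrm{eff}}(K)$ when $K$ is the set of iterates so far.

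The repair is a direct computation. For every $Y\succeq 0$, $\inner{-X^{-1}-\eta X^{-q}}{Y-X}\le n+\eta\tr(X^{-(q-1)})=n(1+s(X))$, so the gap on the path is at most $n(1+M_K)/t$. This only changes the argument of the logarithm, so the $\mathcal{O}(\sqrt{n}\log(1/\eps))$ count survives.

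In the $\beta$-neighbourhood extension, also check that the exact central point $X(t)$, which is not an iterate, lies in a Dikin enlargement of $K$. This is routine: the $\phi_{q,\eta}$-norm dominates the $-\log\det$ norm, so $(1-r)X\preceq X(t)$.

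Your closing caveat about $\eps\to0$ is correct and can be quantified. On the path $XS=\mu(I+\eta X^{-(q-1)})$, under strict complementarity the vanishing eigenvalues scale like $\mu^{1/q}$. Hence $s(X)$ and $\nu(X)$ grow like $\mu^{-(q-1)/q}$, and no fixed $K$ contains the iterates for all $\eps$.
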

	
	\begin{remark}[Comparison with the log-det barrier]\label{rem:logdet_recovery}
		Setting $\eta=0$ gives $s(X)\equiv 0$, $C_{q}(0)=1$, and
		$\nu_{\mathrm{eff}}(K)=n$ for every compact $K$, recovering
		the classical $n$-self-concordant barrier parameter of
		$-\log\det$ \cite{nesterovtodd1997}.
		For $\eta>0$, $\nu_{\mathrm{eff}}(K)=n\,C_{q}(M_{K})\geq n$,
		where $C_{q}(M)\leq 2$ for $M\leq 1$ on practical compact sets.
		The Tsallis perturbation thus deforms the geometry of the barrier
		without increasing the complexity class, while delivering the
		spectral robustness benefits of \cref{thm:robustness}.
		Empirically, the outer iteration count (\cref{tab:iterations_full})
		is insensitive to $\eta\in\{0,0.5,1.0\}$, consistent with
		$C_{q}(M_{K})$ remaining bounded across the tested instances.
	\end{remark}
	
	\section{Proof of \cref{thm:robustness}}\label{app:robustness}
	
	\begin{proof}
		\textbf{Implicit differentiation.}
		The central-path point $X^{*}(b)$ satisfies
		\begin{equation}\label{eq:kkt_b}
			C + \mu\Grad\phi_{q,\eta}(X^{*}) + \mathcal{A}^{*}(y^{*}) = 0,
			\qquad \mathcal{A}(X^{*}) = b.
		\end{equation}
		Differentiating with respect to $b$ in direction $\Delta b$ yields
		\begin{equation}\label{eq:sensitivity}
			\mu\,M\,[\Delta X] + \mathcal{A}^{*}(\Delta y) = 0,
			\qquad \mathcal{A}(\Delta X) = \Delta b,
		\end{equation}
		where $M:=\Hess\phi_{q,\eta}(X^{*})$.
		
		\textbf{Solution formula and norm bound.}
		Eliminating $\Delta y$ from \eqref{eq:sensitivity}:
		\begin{equation}\label{eq:Delta_X}
			\Delta X
			= \frac{1}{\mu}\,M^{-1}\mathcal{A}^{*}
			\bigl(\mathcal{A}M^{-1}\mathcal{A}^{*}\bigr)^{-1}\Delta b.
		\end{equation}
		Taking the Frobenius norm and applying sub-multiplicativity:
		\begin{equation}\label{eq:Delta_X_norm}
			\norm{\Delta X}_{\Frob}
			\;\leq\;
			\frac{C_{\mathcal{A}}}{\mu}\,\opnorm{M^{-1}}\,\norm{\Delta b}_{2}
			\;+\; o(\norm{\Delta b}_{2}).
		\end{equation}
		
		\textbf{Spectral structure of $M$ and exact formula for $\lambda_{\min}(M)$.}
		Write $X^{*}=Q\diag(\lambda)Q^{\top}$ with
		$\lambda_{1}\geq\cdots\geq\lambda_{n}>0$.
		In this eigenbasis, $M$ acts on $H\in\Sym^{n}$
		(with $\widetilde{H}:=Q^{\top}HQ$) as
		\begin{equation}\label{eq:M_action}
			M[H,H]
			\;=\;
			\sum_{i,j}
			\Bigl[\lambda_{i}^{-1}\lambda_{j}^{-1}
			+\eta\,q^{[2]}(\lambda_{i},\lambda_{j})\Bigr]
			\widetilde{H}_{ij}^{2},
		\end{equation}
		where $q^{[2]}(\lambda_{i},\lambda_{j})\geq 0$ since
		$\lambda\mapsto\lambda^{-q}$ is convex.
		For the diagonal block $H=e_{k}e_{k}^{\top}$:
		\begin{equation}\label{eq:M_kk}
			a_{k}
			\;:=\;
			M[e_{k}e_{k}^{\top},e_{k}e_{k}^{\top}]
			\;=\;
			\lambda_{k}^{-2}\bigl(1+\eta q\,\lambda_{k}^{-(q-1)}\bigr).
		\end{equation}
		The function $s\mapsto s^{-2}(1+\eta q\,s^{-(q-1)})$ is strictly
		decreasing (derivative $-2s^{-3}-\eta q(q+1)s^{-(q+2)}<0$), so
		$a_{1}\leq a_{k}\leq a_{n}$ for all $k$, with minimum $a_{1}$ at
		$k=1$ (the $\lambda_{\max}$ block).
		For off-diagonal blocks $i\neq j$:
		$M[E_{ij},E_{ij}]=\lambda_{i}^{-1}\lambda_{j}^{-1}
		+\eta\,q^{[2]}(\lambda_{i},\lambda_{j})\geq\lambda_{\max}^{-2}\geq a_{1}$.
		Therefore
		\begin{equation}\label{eq:lambda_min_M}
			\lambda_{\min}(M)
			\;=\; a_{1}
			\;=\;
			\lambda_{\max}(X^{*})^{-2}
			\bigl[1+\eta q\,\lambda_{\max}(X^{*})^{-(q-1)}\bigr],
		\end{equation}
		and $\opnorm{M^{-1}}=\lambda_{\max}(X^{*})^{2}/
		(1+\eta q\,\lambda_{\max}(X^{*})^{-(q-1)})$.
		Substituting into~\eqref{eq:Delta_X_norm}
		gives~\eqref{eq:robustness_bound}.
		
		\textbf{Monotone improvement over $\eta=0$.}
		At $\eta=0$, $\opnorm{M_{0}^{-1}}=\lambda_{\max}^{2}$.
		For $\eta>0$:
		$\opnorm{M^{-1}}/\opnorm{M_{0}^{-1}}
		=(1+\eta q\,\lambda_{\max}^{-(q-1)})^{-1}<1$,
		showing reduced modal amplification relative to the log-det barrier.
		Recovery as $\eta\to 0^{+}$ or $q\to 1^{+}$ is immediate.
		
		\textbf{Directional bound and eigenvalue-adaptive decay.}
		Since $a_{k}^{-1}=\lambda_{k}^{2}/(1+\eta q\,\lambda_{k}^{-(q-1)})$,
		the $k$-th diagonal component of $\Delta X$ satisfies
		\eqref{eq:directional_bound}.
		For $\lambda_{k}\to 0^{+}$:
		$\lambda_{k}^{2}/(1+\eta q\,\lambda_{k}^{-(q-1)})
		\sim\lambda_{k}^{q+1}/(\eta q)\to 0$
		since $q+1>2$, proving~\eqref{eq:decay_small_lambda}.
	\end{proof}
	
	\section{Algorithm details}\label{app:algorithm}
	
	\subsection{Newton system derivation}
	
	At each iteration $k$ of \cref{alg:pcipm}, we seek a Newton direction
	$(\Delta X,\Delta y,\Delta S)$ for the symmetrised system \eqref{eq:kkt}.
	Applying the Nesterov--Todd symmetrisation \cite{toddtohtutuncu1998}
	with scaling matrix
	$W_{k}\!=\!X_{k}^{1/2}(X_{k}^{1/2}S_{k}X_{k}^{1/2})^{-1/2}X_{k}^{1/2}$,
	the linearised KKT conditions are:
	\begin{equation}\label{eq:Newton_full}
		\left\{
		\begin{aligned}
			\mathcal{A}(\Delta X) &= r_{p} := b - \mathcal{A}(X_{k}),\\
			\mathcal{A}^{*}(\Delta y) + \Delta S &= r_{d}
			:= C - \mathcal{A}^{*}(y_{k}) - S_{k},\\
			\mathcal{S}_{W_{k}}(S_{k}\Delta X + \Delta S\,X_{k})
			&= r_{c}^{(q,\eta)} := \sigma\mu_{k}(I+\eta X_{k}^{-(q-1)})
			- \mathcal{S}_{W_{k}}(X_{k}S_{k})
			- \Delta X^{\mathrm{aff}}\Delta S^{\mathrm{aff}},
		\end{aligned}
		\right.
	\end{equation}
	where $\mathcal{S}_{W}(M)=\frac{1}{2}(WMW^{-1}+W^{-1}M^{\top}W)$.
	The predictor step sets $\sigma=0$; the corrector includes the
	centering parameter $\sigma_{k}$ and the second-order correction term
	following Mehrotra \cite{mehrotra1992}.
	
	The Newton system \eqref{eq:Newton_full} has the block structure
	\[
	\begin{bmatrix}
		0 & \mathcal{A}^{*} & I\\
		\mathcal{A} & 0 & 0\\
		\mathcal{E}(W_{k}) & 0 & \mathcal{F}_{q,\eta}(X_{k},W_{k})
	\end{bmatrix}
	\begin{bmatrix}\Delta X\\ \Delta y\\ \Delta S\end{bmatrix}
	=
	\begin{bmatrix}r_{d}\\ r_{p}\\ r_{c}^{(q,\eta)}\end{bmatrix}.
	\]
	This system is solved by eliminating $\Delta S$, substituting into
	the second block, and solving the resulting positive definite
	Schur-complement system for $\Delta y$ via a single Cholesky
	factorisation of an $m\times m$ matrix.
	
	\subsection{Convergence argument}
	
	The convergence proof follows \cite{monteiro1998,myklebusttuncel2014}.
	The key points are: (1) by \cref{thm:self_concordance}(i),
	$\phi_{q,\eta}$ satisfies the differential self-concordance
	inequality for $q\in(1,2]$; on the compact
	spectral subsets visited by the iterates, \cref{app:nu_param}
	establishes an effective local barrier parameter $\nu_{\mathrm{eff}}(K)=O(n)$,
	so the proximity analysis of \cite[Chap.~5]{renegar2001} applies;
	(2) the additional centring term $\sigma\mu_{k}\eta X_{k}^{-(q-1)}$
	is bounded by the barrier's boundary divergence and the step-size rule;
	(3) the Mehrotra centering parameter satisfies $\sigma_{k}\in[0,1]$
	\cite[Lem.~3]{mehrotra1992} and the corrector maintains the iterates
	in a fixed neighbourhood of the central path; (4) under these
	conditions, \cite[Thm.~4.1]{monteiro1998} gives
	$\mathcal{O}(\sqrt{n}\log(1/\eps))$ outer iterations.
	
	\bibliographystyle{plain}

\begin{thebibliography}{99}
		
		\bibitem{amidetal2019}
		{\sc E.~Amid, M.~K. Warmuth, R.~Anil, and T.~Koren},
		{\em Robust bi-tempered logistic loss based on Bregman divergences},
		in Advances in Neural Information Processing Systems~32,
		H.~Wallach, H.~Larochelle, A.~Beygelzimer, F.~d'Alch\'{e}-Buc,
		E.~Fox, and R.~Garnett, eds.,
		Curran Associates, Red Hook, NY, 2019, pp.~15013--15022.
		
		\bibitem{bach2010}
		{\sc F.~Bach},
		{\em Self-concordant analysis for logistic regression},
		Electron. J. Stat., 4 (2010), pp.~384--414.
		
		\bibitem{bubeckeldan2019}
		{\sc S.~Bubeck and R.~Eldan},
		{\em The entropic barrier: Exponential families, log-concave geometry,
			and self-concordance},
		Math. Oper. Res., 44 (2019), pp.~264--276.
		
		\bibitem{candesrecht2009}
		{\sc E.~J. Cand\`{e}s and B.~Recht},
		{\em Exact matrix completion via convex optimization},
		Found. Comput. Math., 9 (2009), pp.~717--772.
		
		\bibitem{faybusovichtsuchiya2017}
		{\sc L.~Faybusovich and T.~Tsuchiya},
		{\em Matrix monotonicity and self-concordance: how to handle quantum
			entropy in optimization problems},
		Optim. Lett., 11 (2017), pp.~1513--1526.
		
		\bibitem{fawzisaunderson2025}
		{\sc K.~He, J.~Saunderson, and H.~Fawzi},
		{\em Operator convexity along lines, self-concordance, and sandwiched
			R\'{e}nyi entropies},
		Math. Program., published online January~2026,
		\url{https://doi.org/10.1007/s10107-025-02314-0}.
		
		\bibitem{hildebrand2014}
		{\sc R.~Hildebrand},
		{\em Canonical barriers on convex cones},
		Math. Oper. Res., 39 (2014), pp.~841--850.
		
		\bibitem{hildebrand2021}
		{\sc R.~Hildebrand},
		{\em Projectively self-concordant barriers},
		Math. Oper. Res., 47 (2022), pp.~2444--2463.
		
		\bibitem{higham2008}
		{\sc N.~J. Higham},
		{\em Functions of Matrices: Theory and Computation},
		SIAM, Philadelphia, 2008.
		
		\bibitem{karimituncel2024}
		{\sc M.~Karimi and L.~Tun\c{c}el},
		{\em Efficient implementation of interior-point methods for quantum
			relative entropy},
		INFORMS J. Comput., 37 (2025), pp.~3--21,
		\url{https://doi.org/10.1287/ijoc.2024.0570}.
		
		\bibitem{ledoitwolf2004}
		{\sc O.~Ledoit and M.~Wolf},
		{\em A well-conditioned estimator for large-dimensional covariance matrices},
		J. Multivariate Anal., 88 (2004), pp.~365--411.
		
		\bibitem{liuliuliu2012}
		{\sc C.~Liu, H.~Liu, and X.~Liu},
		{\em A new second-order corrector interior-point algorithm for semidefinite
			programming},
		Math. Methods Oper. Res., 75 (2012), pp.~165--183.
		
		\bibitem{mehrotra1992}
		{\sc S.~Mehrotra},
		{\em On the implementation of a primal--dual interior point method},
		SIAM J. Optim., 2 (1992), pp.~575--601.
		
		\bibitem{monteiro1998}
		{\sc R.~D.~C. Monteiro},
		{\em Polynomial convergence of primal--dual algorithms for semidefinite
			programming},
		SIAM J. Optim., 8 (1998), pp.~797--812.
		
		\bibitem{mosek2024}
		{\sc MOSEK ApS},
		{\em The {MOSEK} Optimization Toolbox for {MATLAB} Manual, Version~10},
		2024, \url{https://docs.mosek.com}.
		
		\bibitem{myklebusttuncel2014}
		{\sc T.~Myklebust and L.~Tun\c{c}el},
		{\em Interior-point algorithms for convex optimization based on
			primal--dual metrics},
		arXiv preprint arXiv:1411.2129v2, 2014.
		\url{https://arxiv.org/abs/1411.2129}.
		
		\bibitem{nesterovnemirovski1994}
		{\sc Yu.~Nesterov and A.~Nemirovski},
		{\em Interior-Point Polynomial Algorithms in Convex Programming},
		SIAM Stud. Appl. Math.~13, SIAM, Philadelphia, 1994.
		
		\bibitem{nesterovtodd1997}
		{\sc Yu.~Nesterov and M.~J. Todd},
		{\em Self-scaled barriers and interior-point methods for convex programming},
		Math. Oper. Res., 22 (1997), pp.~1--42.
		
		\bibitem{nesterovtodd1998}
		{\sc Yu.~Nesterov and M.~J. Todd},
		{\em Primal--dual interior-point methods for self-scaled cones},
		SIAM J. Optim., 8 (1998), pp.~324--364.
		
		\bibitem{renegar2001}
		{\sc J.~Renegar},
		{\em A Mathematical View of Interior-Point Methods in Convex Optimization},
		MOS-SIAM Ser. Optim.~3, SIAM, Philadelphia, 2001.
		
		\bibitem{sgariglia2024}
		{\sc D.~Sgariglia, F.~A.~B. da~Silva, et~al.},
		{\em Optimizing therapeutic targets for breast cancer using Boolean network
			models},
		Comput. Biol. Chem., 109 (2024), 108022.
		
		\bibitem{sturm1999}
		{\sc J.~F. Sturm},
		{\em Using {SeDuMi}~1.02, a {MATLAB} toolbox for optimization over
			symmetric cones},
		Optim. Methods Softw., 11/12 (1999), pp.~625--653.
		
		\bibitem{suntrandinh2019}
		{\sc T.~Sun and Q.~Tran-Dinh},
		{\em Generalized self-concordant functions: A recipe for {N}ewton-type
			methods},
		Math. Program., 178 (2019), pp.~145--213.
		
		\bibitem{toddtohtutuncu1998}
		{\sc M.~J. Todd, K.-C. Toh, and R.~H. T\"{u}t\"{u}nc\"{u}},
		{\em On the {N}esterov--{T}odd direction in semidefinite programming},
		SIAM J. Optim., 8 (1998), pp.~769--796.
		
		\bibitem{tohtoddtutuncu1999}
		{\sc K.-C. Toh, M.~J. Todd, and R.~H. T\"{u}t\"{u}nc\"{u}},
		{\em {SDPT3} --- a {M}atlab software package for semidefinite programming},
		Optim. Methods Softw., 11 (1999), pp.~545--581.
		
		\bibitem{tsallis1988}
		{\sc C.~Tsallis},
		{\em Possible generalization of {B}oltzmann--{G}ibbs statistics},
		J. Statist. Phys., 52 (1988), pp.~479--487.
		
		\bibitem{tsallis2009}
		{\sc C.~Tsallis},
		{\em Introduction to Nonextensive Statistical Mechanics: Approaching a
			Complex World},
		Springer, New York, 2009.
		
	\end{thebibliography}

\end{document}